\newtheorem{theorem}{Theorem}
\newtheorem{corollary}[theorem]{Corollary}
\newtheorem{lemma}[theorem]{Lemma}
\newtheorem{proposition}[theorem]{Proposition}
\theoremstyle{definition}
\newtheorem{remark}{Remark}
\newcommand{\EE}{\mathsf{E}}
\newcommand{\PP}{\mathsf{P}}
\title[Distributed Ledgers via Young-age PA] 
      {DAG-type Distributed Ledgers via Young-age Preferential Attachment}
\author[Christian M\"onch and Amr Rizk]{}
\subjclass{Primary: 60C05; Secondary: 68R05, 68M10.}
 \keywords{blockchain, complex networks, distributed ledger technology, graph process.}
 \email{cmoench@uni-mainz.de}
 \email{amr.rizk@uni-due.de}
\thanks{The research of the first author is supported by the Deutsche Forschungsgemeinschaft (DFG, German Research Foundation)--443916008.}
\thanks{$^*$ Corresponding author}
\begin{document}
\maketitle

\centerline{\scshape Christian M\"onch$^*$}
\medskip
{\footnotesize
 \centerline{Johannes Gutenberg University Mainz}
   \centerline{Institute of Mathematics}
   \centerline{Staudingerweg 9, 55128 Mainz, Germany}
} 

\medskip

\centerline{\scshape Amr Rizk}
\medskip
{\footnotesize
 \centerline{University of Duisburg-Essen}
   \centerline{Institute for Computer Science and Business Information Systems}
   \centerline{Gerlingstr. 16, 45127 Essen, Germany}
}

\bigskip



\begin{abstract}
Distributed Ledger Technologies provide a mechanism to achieve ordering among transactions that are scattered on multiple  participants with no prerequisite trust relations.
This mechanism is essentially based on the idea of new transactions referencing older ones in a chain structure. 
Recently, DAG-type Distributed Ledgers that are based on directed acyclic graphs (DAGs) were proposed to increase the system  scalability through sacrificing the total order of transactions.
In this paper, we develop a mathematical model to study the process that governs the addition of new transactions to the DAG-type Distributed Ledger.
We propose a simple model for DAG-type Distributed Ledgers that are obtained from a recursive Young-age Preferential Attachment scheme, i.e.\ new connections are made preferably to transactions that have not been in the system for very long. We determine the asymptotic degree structure of the resulting graph and show that a forward component of linear size arises if the edge density is chosen sufficiently large in relation to the `young-age preference' that tunes how quickly old transactions become unattractive.
\end{abstract}

\section{Motivation and background}\label{sec:intro}

Distributed Ledger Technologies (DLTs) which are based on directed acyclic graphs (DAG) such as the Tangle \cite{popov2016tangle} or Coordicide \cite{Coordicide2020} generalize Blockchain-based DLTs by weakening the ordering among the transaction blocks that constitute the ledger. Blockchain-based DLTs achieve total ordering of the transactions contained by the ledger through the rule of the longest-chain as coined in \cite{nakamoto2006bitcoin}. Here, transactions are collected into blocks that each include a reference to the previous block such that a chain of ordered transactions emerges. When blocks are created concurrently such that only a partial order exists the rule of the longest-chain dictates that blocks that do not belong to the longest chain are discarded. This is a major factor that impairs the scalability of Blockchain-based DLTs in terms of the rate of adding transactions to the ledger.

\medskip

DAG-type Distributed ledgers achieve scalability through allowing partially ordered transaction blocks, i.e.\ transaction blocks may reference more than one previous blocks. 
In this paper, we study the relation of design parameters for DAG-type DLTs based on the transient and limiting properties of the underlying transaction graph. We consider the graph of transaction blocks that emerges through block attachment, i.e. new blocks referencing preceding ones. We focus on two design choices for DAG-type DLTs, namely
\begin{itemize}
	\item \emph{How many preceding blocks should a new block reference?}
	\item \emph{How should the age of a preceding block impact the reference choice of a new block?}
\end{itemize}
The rationale behind these two choices lies in the formulation of simple, probabilistic block reference rules that guarantee the existence of a giant forward component based on \emph{(i)} a simple function of the number of preceding blocks to reference and \emph{(ii)} an age preference parameter. A preference for old or high degree vertices creates a graph architecture that is locally tree like and features vertices of arbitrary high degree, which differs extremely from the linear structure of Blockchain-based DLT. Young age preferential attachment interpolates between these two structural extremes and generates a sparse, locally treelike graph in which there are no hubs, i.e.\ the maximal degree grows only weakly with the system. We provide a simple reference rule that allows us to study the two questions above analytically and investigate some transient and asymptotic properties of the transaction block graph such as the reference degree distribution, degree evolutions, the existence of a giant forward component, and the size of the `detached surface' of vertices in the transaction graph that do not connect to any previous transactions.

\subsection*{Organisation of the paper} In the following section, we introduce the model formally and present our mathematical main results. Section 3 contains a discussion of how our results relate to previous work and Section 4 contains numerical simulations. Finally, detailed proofs of our main results are provided in Section 5.

\section{Model and main results}\label{sec:results}
\subsection*{Model definition}
The model we propose is based on \emph{young-age preferential attachment (YAPA)}. We study a recursive growth parametrized by two numbers, the \emph{edge density} parameter $\alpha>0$ and the \emph{reinforcement bias} $\beta>0$. We use the shorthand notations $[n]$ for $\{1,\dots,n\}$ and $x\wedge y$, $x\vee y$ to denote $\min(x,y)$ and $\max(x,y)$, respectively. At step $n=1$, the graph $G_1$ is initialized with a vertex labelled $1$. At any step $n>1$, one vertex labelled $n$ is added to the system and $G_{n}$ is obtained recursively from $G_{n-1}$ according to the following attachment rule: for each $m\in[n-1]$, $n$ sends an arc to $m$ independently of everything else with probability
\begin{equation}\label{eq:attachment_probability}
	\frac{\alpha}{n-1} \left(\frac{m}{n-1}\right)^\beta  \wedge 1.
\end{equation}

We formally write $G_n=([n], E_n)$ with $E_n\subset [n]\times[n]$ for the graph obtained upon adding the $n$-th vertex. If $n$ sends an arc to $m\in[n-1]$, we denote this event by $n\to m$. The notation $n \twoheadrightarrow m$ is shorthand for the existence of a directed path from $n$ to $m\in[n-1]$. Note that the direction of the arcs along any path can be recovered from the labels of the vertices that belong to the path.

\begin{figure}
	\centering
	\begin{subfigure}[b]{0.32\textwidth}
		\centering
		\includegraphics[width=\textwidth]{./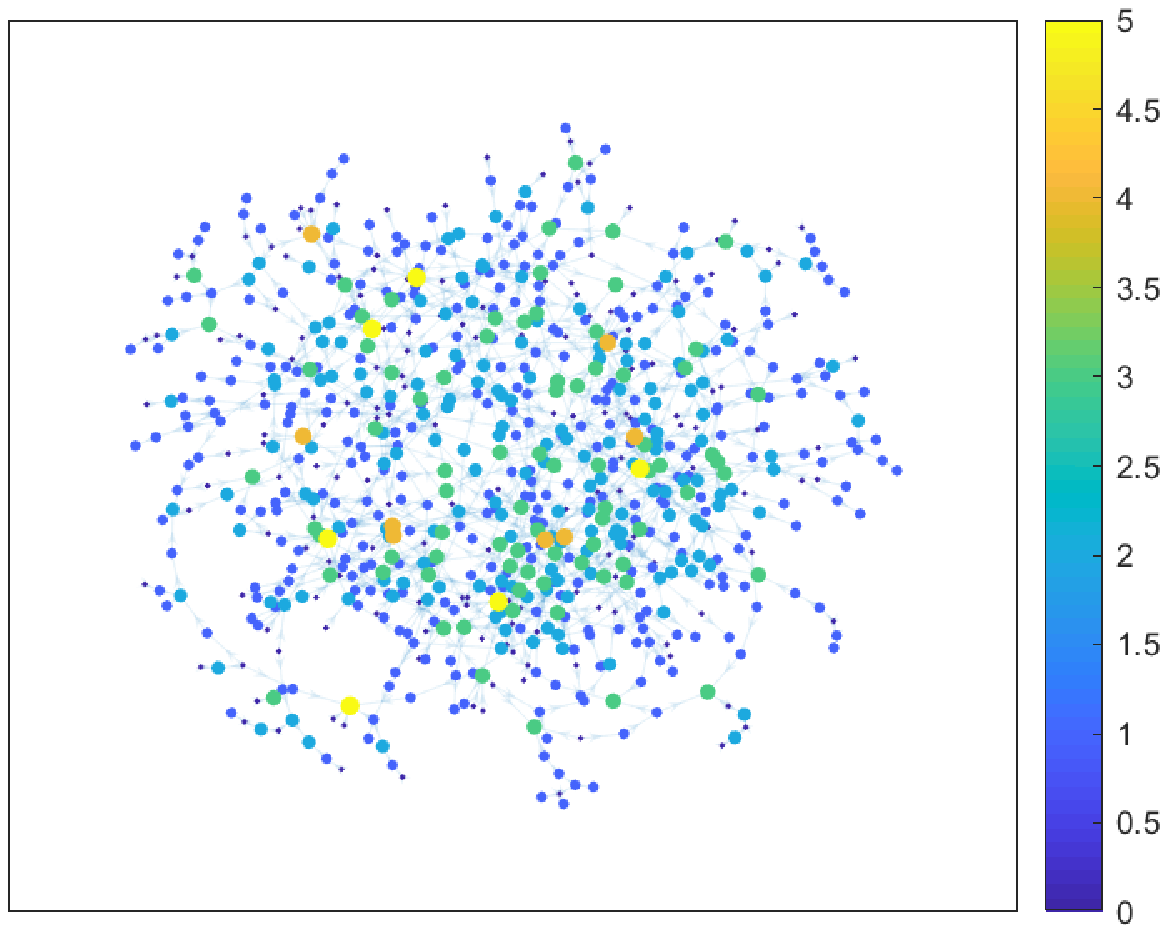}
		\caption{$\alpha=3$}
		\label{fig:graph-alpha3}
	\end{subfigure}
	\begin{subfigure}[b]{0.32\textwidth}
		\centering
		\includegraphics[width=\textwidth]{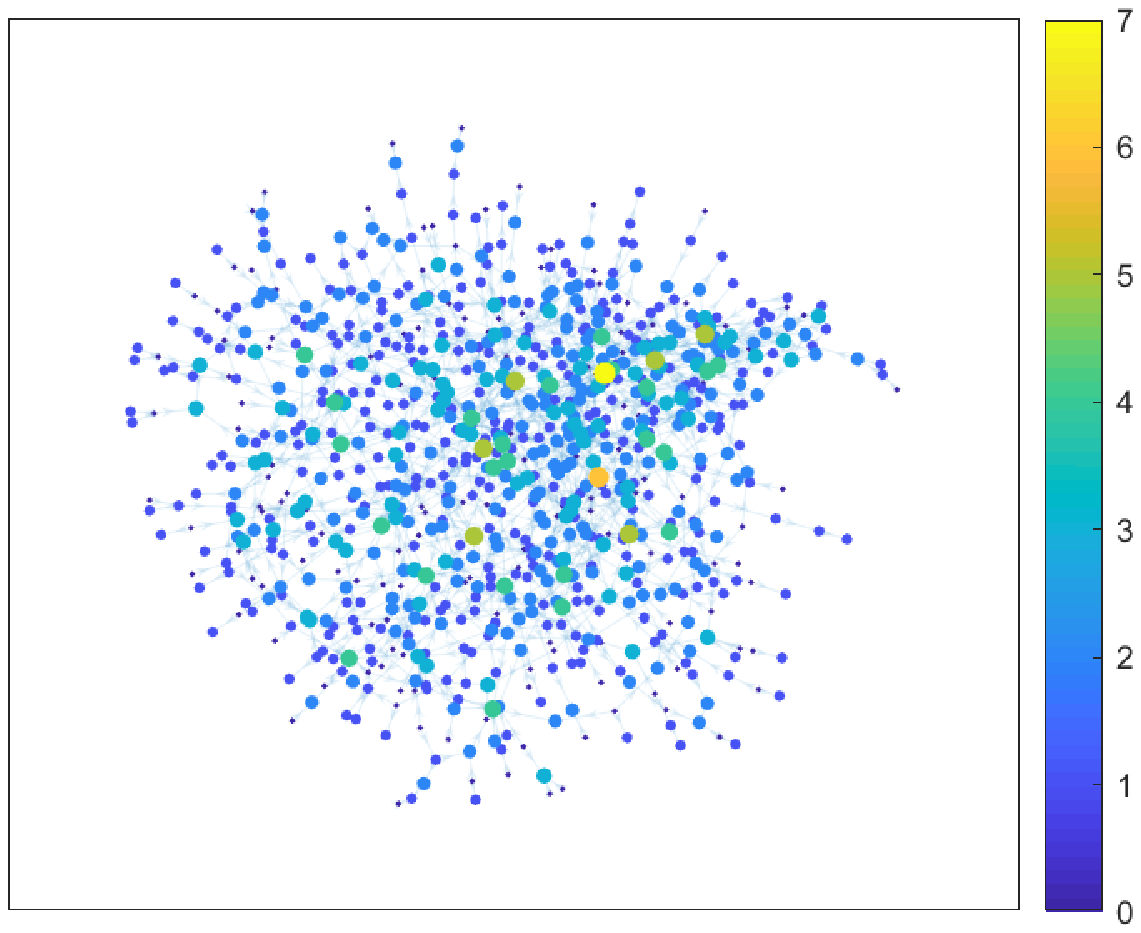}
		\caption{$\alpha=3.5$}
		\label{fig:graph-alpha3.5}
	\end{subfigure}
	\begin{subfigure}[b]{0.32\textwidth}
		\centering
		\includegraphics[width=\textwidth]{./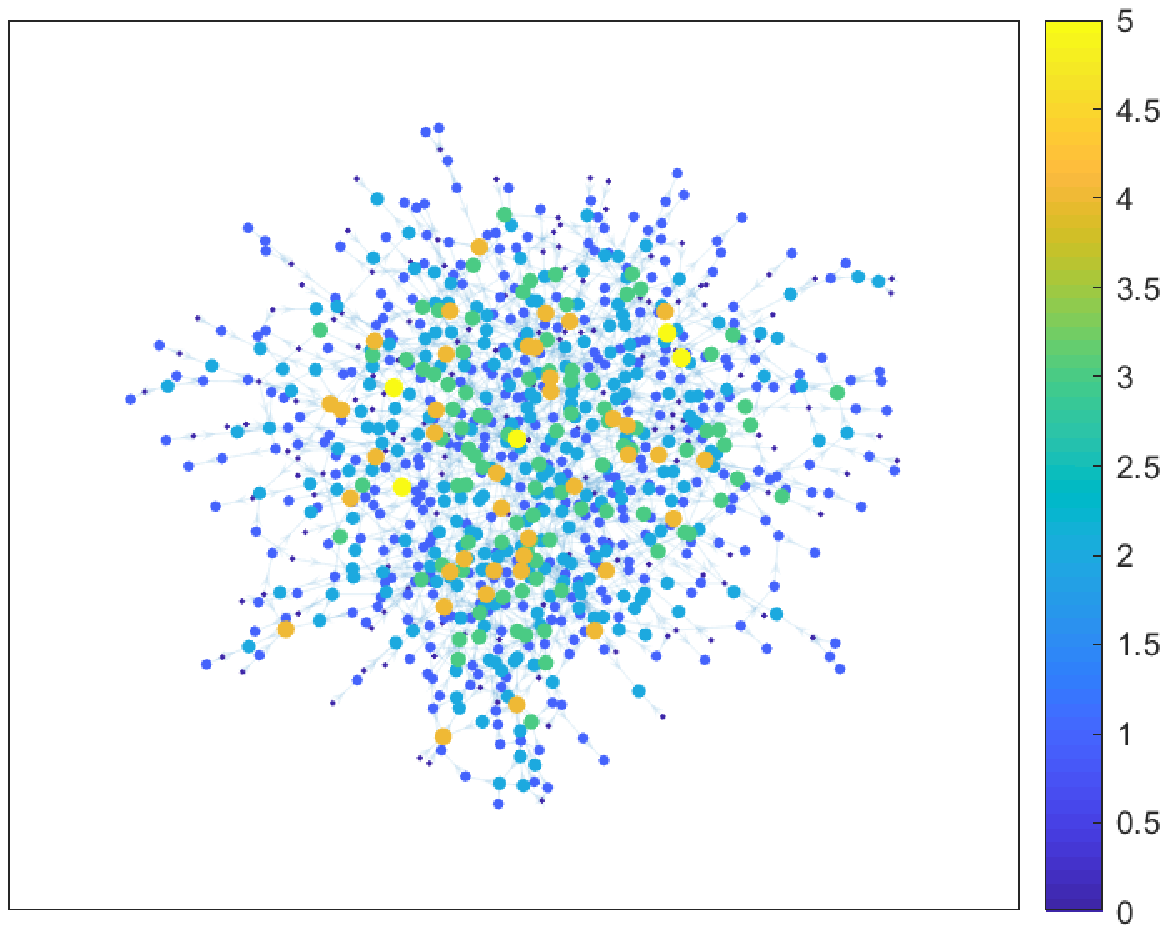}
		\caption{$\alpha=4$}
		\label{fig:graph-alpha4}
	\end{subfigure}
	\captionsetup{width=1.0\linewidth}
	\caption{A depiction of sample realizations of the young age preferential attachment model with reinforcement bias $\beta=2$ and edge density $\alpha\in\left\{3,3.5,4\right\}$. The node size and color correspond to the node indegree.}
	\label{fig:graph-alpha-var}
\end{figure}
\subsection*{Key properties of the YAPA model}
Ignoring the dynamical aspect of our model, it can be viewed as an instance of the directed inhomogeneous random graph model proposed in \cite{CaoOlver20}, which allows us to infer some basic properties of the network from known results about this model class. We first establish that the young-age preferential attachment model produces a sparse graph, i.e.\ the number of arcs is of the same order as the number of vertices.
\begin{proposition}[Sparsity]\label{prop:sparsity} It holds that
	$$\lim_{N\to \infty}\frac{|E_N|}{N} =   \frac{\alpha}{\beta+1}\quad \text{ in }L^1\text{ and almost surely}.$$	
\end{proposition}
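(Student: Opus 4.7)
The plan is to treat $|E_N|$ as a sum of independent random variables and apply a strong law of large numbers, after a short Riemann-sum computation for the mean.

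First I would observe that the attachment decisions at step $n$ are independent of the entire history, so the per-step in-increments
\[
X_n := |\{m\in[n-1]:n\to m\}|=\sum_{m=1}^{n-1}\mathbf{1}_{n\to m},\qquad n\geq 2,
\]
are independent across $n$, each being a sum of independent Bernoulli variables with parameters $p_{n,m}=\tfrac{\alpha}{n-1}(m/(n-1))^\beta\wedge 1$. Writing $|E_N|=\sum_{n=2}^N X_n$ reduces the claim to a SLLN-type statement for independent summands.

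Next I would compute $\EE[X_n]$. For $n>\alpha+1$ the cap in \eqref{eq:attachment_probability} is inactive, so
\[
\EE[X_n]=\frac{\alpha}{(n-1)^{\beta+1}}\sum_{m=1}^{n-1}m^\beta.
\]
A standard Riemann-sum estimate (or Faulhaber) gives $\sum_{m=1}^{n-1}m^\beta=\tfrac{(n-1)^{\beta+1}}{\beta+1}+O((n-1)^\beta)$, hence $\EE[X_n]\to\alpha/(\beta+1)$; by Cesàro this forces $N^{-1}\sum_{n=2}^N\EE[X_n]\to\alpha/(\beta+1)$. The finitely many small-$n$ terms where the cap is active contribute $O(1)$ to $\EE[|E_N|]$ and are therefore asymptotically irrelevant.

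Then I would handle the fluctuations. Since each $X_n$ is a sum of independent Bernoullis, $\mathrm{Var}(X_n)\leq\EE[X_n]$, which is bounded uniformly in $n$. Thus $\sum_{n\geq 2}\mathrm{Var}(X_n)/n^2<\infty$, and Kolmogorov's SLLN yields
\[
\frac{1}{N}\sum_{n=2}^N(X_n-\EE[X_n])\longrightarrow 0\quad\text{almost surely}.
\]
Combining with the Cesàro statement above gives $|E_N|/N\to\alpha/(\beta+1)$ almost surely.

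Finally, to upgrade to $L^1$, I would note that $\mathrm{Var}(|E_N|/N)=N^{-2}\sum_{n=2}^N\mathrm{Var}(X_n)=O(1/N)$, so $(|E_N|/N)_{N}$ is bounded in $L^2$ and hence uniformly integrable, which together with the almost sure limit yields $L^1$ convergence. The only mildly technical point is the Riemann-sum control in the mean computation and the careful treatment of the cap for small $n$; no step seems to pose a real obstacle beyond bookkeeping.
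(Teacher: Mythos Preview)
Your argument is correct and, for the almost sure part, matches the paper's proof essentially line for line: both decompose $|E_N|$ into the independent per-step increments $X_n$, bound $\mathrm{Var}(X_n)$ uniformly (you via $\mathrm{Var}(X_n)\le \EE[X_n]$, the paper via $w_n(m)\le\alpha$), and invoke Kolmogorov's strong law. The only genuine difference is in the $L^1$ statement. The paper obtains it by appealing to the general machinery of directed inhomogeneous random graphs, citing \cite[Proposition~3.3]{CaoOlver20} after identifying the kernel $\kappa(x,y)=\alpha x^\beta y^{-(\beta+1)}\mathsf{1}\{x<y\}$ and computing $\iint\kappa=\alpha/(\beta+1)$. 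You instead compute $\EE[X_n]$ directly via a Riemann sum and then upgrade the almost sure convergence to $L^1$ through an $L^2$-boundedness/uniform integrability argument. Your route is more self-contained and avoids importing the external framework; the paper's route has the advantage of situating the model inside a known class, which it reuses later for the degree distribution and the local limit. Both are entirely standard and equally valid here.
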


Next, we determine the distributional limit of the vertex degrees in $G_n$. For any vertex $m\in[n]$, we denote by $D_n(m)=(D_n^{\mathsf{in}}(m),D_n^{\mathsf{out}}(m))$ its degree (vector) in $G_n$, i.e.\ \[D_n^{\mathsf{in}}(m)=|\{k: k\to m \text{ in } G_n \}|\text{ and } D_n^{\mathsf{out}}(m)=|\{k: k\leftarrow m \text{ in } G_n \}|.\]
Note that the outdegree of a vertex is determined once and for all upon its insertion, i.e.\ $D_n^{\mathsf{out}}(m)=D_m^{\mathsf{out}}(m)=:D^{\mathsf{out}}(m)$ for all $n\geq m$. As can be guessed from the fact that connections occur independently and with probability of order $1/n$, the asymptotic degree distribution is bivariate Poisson.
\begin{theorem}[Limiting degree distribution and rate of convergence]\label{thm:typicaldegree}
	Let $G_N$ denote the young-age preferential attachment graph on $N$ vertices. For $U\in[N]$ uniformly chosen among the vertices of $G_N$ the following asymptotic results holds $$D_N(U)=(D_N^{\mathsf{in}}(U),D^{\mathsf{out}}(U))\longrightarrow (Z^{\mathsf{in}},Z^{\mathsf{out}}) \quad \text{ in distribution and in }L^1 \text{ as }N\to\infty.$$
	Here, $Z^{\mathsf{in}}$ and $Z^{\mathsf{out}}$ are independent and $Z^{\mathsf{out}}$ is $\operatorname{Poisson}(\alpha/(\beta+1))$-distributed. The limiting indegree $Z^{\mathsf{in}}$ has distribution $\operatorname{mixed-Poisson}(\Lambda)$ with mixing random variable $\Lambda=\alpha(1-V^\beta)/\beta$, where $V$ is $\operatorname{Uniform}[0,1]$. Moreover we have that
	\begin{equation}\label{eq:totalvariation}
		\mathsf{d}_{\textup{TV}}(P^{\mathsf{in/out}}(N),p^{\mathsf{in/out}})= O\left(\sqrt{\frac{(\log N)^3}{N}}\right), \quad \text{almost surely as }N\to\infty,
	\end{equation}
	where $P^{\mathsf{in/out}}(N)=(P^{\mathsf{in/out}}_{k}(N))_{k=0}^\infty$ denotes the empirical in- or outdegree distribution of $G_N$ and $p^{\mathsf{in/out}}=(p^{\mathsf{in/out}}_k)_{k=0}^\infty$ denotes the distribution of $Z^{\mathsf{in/out}}$ and $\mathsf{d}_{\textup{TV}}(\cdot,\cdot)$ denotes total variation distance.
\end{theorem}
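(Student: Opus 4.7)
The key structural observation underlying my approach is that the edge indicators $X_{n,m} := \mathbf{1}\{n \to m\}$ for $1 \leq m < n \leq N$ are mutually independent Bernoullis by the attachment rule~\eqref{eq:attachment_probability}. As a consequence, for every fixed $m$ the indegree $D_N^{\mathsf{in}}(m) = \sum_{k>m} X_{k,m}$ and outdegree $D^{\mathsf{out}}(m) = \sum_{k<m} X_{m,k}$ depend on disjoint families of indicators and are therefore independent; moreover, the families $\{D_N^{\mathsf{in}}(m)\}_{m\in[N]}$ and $\{D^{\mathsf{out}}(m)\}_{m\in[N]}$ are each mutually independent across $m$. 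This reduces the theorem to a Poisson approximation at each vertex together with Hoeffding-type concentration of empirical counts built from independent summands.

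For the convergence in distribution I would write $\mu_N^{\mathsf{in}}(m)$ and $\mu^{\mathsf{out}}(m)$ for the Bernoulli-sum means. Treating the finitely many initial vertices $m<\alpha$ (where the cap $\wedge 1$ is active) as a negligible error, a Riemann-sum calculation delivers
\begin{equation*}
	\mu_N^{\mathsf{in}}(m) = \tfrac{\alpha}{\beta}\bigl(1-(m/N)^\beta\bigr) + O(1/m), \qquad \mu^{\mathsf{out}}(m) = \tfrac{\alpha}{\beta+1} + O(1/m),
\end{equation*}
while Le Cam's inequality gives $\mathsf{d}_{\textup{TV}}(\mathcal{L}(D_N^{\mathsf{in}}(m)), \operatorname{Poisson}(\mu_N^{\mathsf{in}}(m))) = O(1/m)$, and analogously for the outdegree. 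For $U$ uniform on $[N]$ one has $U/N \Rightarrow V \sim \operatorname{Uniform}[0,1]$; averaging the Poisson approximations over $m$ and passing to the limit by dominated convergence identifies the joint limit as the product of an independent $\operatorname{Poisson}(\alpha/(\beta+1))$ and a $\operatorname{mixed-Poisson}(\alpha(1-V^\beta)/\beta)$, the product structure being inherited from the per-vertex independence of $D_N^{\mathsf{in}}(m)$ and $D^{\mathsf{out}}(m)$. The $L^1$ statement then follows from Vitali's theorem, since Proposition~\ref{prop:sparsity} forces the first moments to converge, $\EE[D_N^{\mathsf{in}}(U)+D^{\mathsf{out}}(U)] = 2\EE|E_N|/N \to 2\alpha/(\beta+1)$, and together with distributional convergence of non-negative integer-valued variables this yields uniform integrability.

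For the rate I would write $NP_k^{\mathsf{in}}(N) = \sum_m \mathbf{1}\{D_N^{\mathsf{in}}(m)=k\}$, a sum of independent $\{0,1\}$-valued summands by the opening observation. Hoeffding's inequality then gives $|P_k^{\mathsf{in}}(N) - \EE P_k^{\mathsf{in}}(N)| \leq C\sqrt{(\log N)/N}$ simultaneously in $k$ with probability at least $1 - O(N^{-3})$, while a uniform Chernoff bound on $D_N^{\mathsf{in}}(m)$ (using $\mu_N^{\mathsf{in}}(m) \leq \alpha/\beta$) shows that both $\EE P_k^{\mathsf{in}}(N)$ and the mixed-Poisson limit $p_k^{\mathsf{in}}$ have exponentially light tails in $k$, so truncation at $K_N = C\log N$ contributes only $O(N^{-1})$ to the total-variation distance for $C$ chosen large enough. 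Summing the Hoeffding bound over $k \leq K_N$, combining with the deterministic bias $\sum_{k\leq K_N}|\EE P_k^{\mathsf{in}}(N) - p_k^{\mathsf{in}}|$, and invoking Borel--Cantelli produces the claimed almost-sure rate $O(\sqrt{(\log N)^3/N})$; the outdegree case is analogous and in fact easier, since $\mu^{\mathsf{out}}$ is essentially constant in $m$.

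The hard part will be controlling the deterministic bias uniformly in $k$ up to $K_N$. It decomposes into the per-vertex Le Cam error (harmless, since it sums to $O((\log N)/N)$ after the $1/N$-averaging) and a Riemann-sum discrepancy between $\frac{1}{N}\sum_m \operatorname{Poisson}(\mu_N^{\mathsf{in}}(m))_k$ and $\int_0^1 \operatorname{Poisson}(\alpha(1-v^\beta)/\beta)_k\,dv$. Handling the latter for every $k \leq K_N$ requires smoothness estimates on $v \mapsto e^{-\mu(v)}\mu(v)^k/k!$ with $\mu(v)=\alpha(1-v^\beta)/\beta$, whose derivative can blow up near $v = 1$ where $\mu$ vanishes; ensuring that the resulting per-$k$ error stays small enough that the sum over $k \leq C\log N$ remains of order $(\log N)^{3/2}N^{-1/2}$ is where care is most needed.
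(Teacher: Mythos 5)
Your overall plan matches the paper's for the rate bound, but you take a genuinely different (and more elementary) route to the distributional limit. The paper obtains the form of $(Z^{\mathsf{in}},Z^{\mathsf{out}})$ by a black-box appeal to \cite[Theorem 3.4]{CaoOlver20} after identifying the model as a directed inhomogeneous random graph with kernel $\kappa(x,y)=\alpha x^\beta y^{-\beta-1}\mathsf{1}\{x<y\}$; your Le~Cam plus Riemann-sum argument reconstructs that limit from scratch, and your observation that $D_N^{\mathsf{in}}(m)$ and $D^{\mathsf{out}}(m)$ live on disjoint independent indicator families cleanly explains why $Z^{\mathsf{in}}\perp Z^{\mathsf{out}}$ (because the conditional law of $Z^{\mathsf{out}}$ given $V$ does not depend on $V$). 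For the rate, both you and the paper split $\mathsf{d}_{\textup{TV}}$ into a concentration term and a deterministic bias term (the paper's Lemmas~\ref{lem:concentrationbound} and~\ref{lem:deterministic}), truncate at $K_N\asymp\log N$ using the light Poisson tails, bound the stochastic part by a Bernstein/Chernoff-type inequality on the independent indicators $\mathsf{1}\{D_N^{\mathsf{in}}(m)=k\}$, and finish with Borel--Cantelli; your use of Hoeffding in place of Chernoff is immaterial at this scale.

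One thing to correct: the difficulty you flag near $v=1$, where $\mu(v)=\alpha(1-v^\beta)/\beta$ vanishes, is not actually where the trouble (if any) sits. The $x$-derivative of the Poisson mass, $\frac{\textup{d}}{\textup{d}x}\bigl(\textup{e}^{-x}x^k\bigr)=(k-x)\textup{e}^{-x}x^{k-1}$, is bounded near $x=0$ for every $k$, and this is precisely how the paper extracts $m$- and $n$-independent Lipschitz constants $L_k\sim k\,\ell^{\,k-1}$ with $\sum_k L_k/k!<\infty$, which is what makes the sum over $k\leq K_N$ harmless. The place where the $v$-derivative can grow is near $v=0$ when $\beta<1$, since $\mu'(v)=-\alpha v^{\beta-1}$ is then unbounded; but there $\mu$ stays bounded away from $0$ and the discrete sum starts at $m=1$, so the Riemann-sum error still accumulates only to $O(\log n/n)$, as in Lemma~\ref{lem:deterministic}. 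Your $L^1$ statement via Vitali (convergence in distribution of nonnegative integer variables together with $\EE D_N^{\mathsf{in/out}}(U)=\EE|E_N|/N\to\alpha/(\beta+1)=\EE Z^{\mathsf{in/out}}$ forces uniform integrability) is a clean way to supply the part the paper leaves implicit.
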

\begin{remark}
	The form of the limiting degree distribution in Theorem~\ref{thm:typicaldegree} is obtained rather directly by applying the machinery of \cite{CaoOlver20}, but the error bound \eqref{eq:totalvariation} is new and does not follow from previous results about inhomogeneous random graphs. Inspection of the proof further yields that an error of the same order can be achieved for any graph on $N$ vertices in which edges are drawn independently with probability at most $c/N$ for some universal $c$, cf.\ Lemma~\ref{lem:concentrationbound}. We do not believe that the logarithmic term is asymptotically sharp, but the polynomial order $1/\sqrt{N}$ is consistent with known results for other sparse models with independent edges such as the Erd\H{o}s-Rényi-Graph, cf.\ \cite[Theorem 5.12]{hofstad_book}.
\end{remark}

It follows from Theorem~\ref{thm:typicaldegree} that whereas the outdegree is asymptotically independent of the time at which a node enters the system, the indegree is not. Essentially, the limiting indegree distribution of a given node is also $\operatorname{Poisson}(\alpha/(\beta+1))$ but the time needed to collect all incoming links grows proportionally with network size. A node born at the beginning corresponds to $V$ near $0$ and a node born at a time close to the total age of the system corresponds to $V$ close to $1$.

\medskip

The indegree dynamics of fixed vertices are further characterized by the following bounds:
\begin{proposition}[Degree evolutions]\label{thm:degev}
	The maximal (total) degree of any vertex in $G_N$ is almost surely $O(\log N)$. We further have that
	\begin{enumerate}[(i)]
		\item For any $n\geq m\geq 1$, $$\PP[D_n^{\mathsf{in}}(m) > x]\leq \exp\left[-\frac{\alpha}{\beta}-x\left(\log x +\log \frac{\beta}{\alpha}-\frac{\beta}{\alpha} \right)\right],\; x>\alpha/\beta;$$
		\item $\EE[D^{\mathsf{out}}(m)]=\frac{\alpha}{\beta+1}+O(1/m)$ for any $m\geq 1.$
	\end{enumerate}	
\end{proposition}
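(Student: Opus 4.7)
The plan rests on the observation that the attachment rule \eqref{eq:attachment_probability} makes the indicators $\{\mathbf{1}\{k\to m\}\}_{k\neq m}$ mutually independent, so that for every fixed $m$ both $D_n^{\mathsf{in}}(m)=\sum_{k=m+1}^n\mathbf{1}\{k\to m\}$ and $D^{\mathsf{out}}(m)=\sum_{k=1}^{m-1}\mathbf{1}\{m\to k\}$ are sums of independent Bernoulli variables. Each of the three assertions then reduces to a mean computation followed by a standard exponential Chernoff estimate.

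I would start with the simplest piece, (ii). For $m\geq\alpha+1$ the clip $\wedge 1$ in \eqref{eq:attachment_probability} is inactive for every $k\leq m-1$, so $\EE[D^{\mathsf{out}}(m)] = \alpha(m-1)^{-\beta-1}\sum_{k=1}^{m-1} k^\beta$. Euler--Maclaurin (or a direct integral sandwich) yields $\sum_{k=1}^{m-1} k^\beta = (m-1)^{\beta+1}/(\beta+1) + O((m-1)^\beta)$, and substitution produces $\alpha/(\beta+1)+O(1/m)$; the finitely many small-$m$ cases are absorbed into the implicit constant.

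For (i), I bound the mean uniformly by an integral comparison, $\sum_{j=m}^\infty j^{-\beta-1}\leq (m-1)^{-\beta}/\beta$, which gives $\EE[D_n^{\mathsf{in}}(m)]\leq (\alpha/\beta)(1+O(1/m))$ independently of $n$, with the limiting value $\alpha/\beta$ attained as $m\to\infty$. Setting $\mu:=\alpha/\beta$ and using independence, the Laplace transform is controlled by $\EE[e^{tD_n^{\mathsf{in}}(m)}]\leq\exp(\mu(e^t-1))$, whence
$$\PP[D_n^{\mathsf{in}}(m)\geq x]\leq\inf_{t>0}\exp(-tx+\mu(e^t-1))=\exp\bigl(-\mu+x-x\log(x/\mu)\bigr)$$
for $x>\mu$, with the minimizer at $t=\log(x/\mu)=\log(\beta x/\alpha)$. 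Rearranging in terms of $\alpha,\beta$ yields the expression displayed in (i).

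Finally, for the maximum-degree bound I apply the tail from (i) together with an analogous Chernoff bound for $D^{\mathsf{out}}(m)$ (whose mean is uniformly $O(1)$ in $m$ by (ii) and a direct estimate for the finitely many small-$m$ exceptions), evaluated at the threshold $x=C\log N$ with $C$ large. Each individual probability is then of order $\exp(-C(\log N)(\log\log N)(1+o(1)))$, so a union bound over $m\in[N]$ remains summable in $N$ once $C$ exceeds a suitable constant; Borel--Cantelli delivers $\max_{m\in[N]}(D_N^{\mathsf{in}}(m)+D^{\mathsf{out}}(m))=O(\log N)$ almost surely. The principal technical point I expect to spend the most care on is the constant-tracking in the Chernoff rearrangement for (i), since the form stated in the proposition is not manifestly identical to the canonical Chernoff expression and needs a short algebraic reconciliation before the max-degree union bound can be carried out cleanly.
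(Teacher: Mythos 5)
Your route is essentially the paper's for all three parts. For (ii), both sum $p_m(k)$ over $k<m$ and approximate by an integral. For (i), both pass through the product-form MGF, bound it by $\exp\bigl((e^\theta-1)\sum_j p_j(m)\bigr)$ using $\sum_j p_j(m)\leq\alpha/\beta$, and optimize at $\theta=\log(x\beta/\alpha)$. For the maximal degree, the paper isolates the argument in Lemma~\ref{lem:maxdegree} (dominate each total degree by a $\operatorname{Bin}(2N,A/N)$, Chernoff, union bound over $[N]$, Borel--Cantelli); you instead feed $x\asymp\log N$ into the tail from (i) plus an analogous outdegree tail, which is an equivalent calculation.

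Two points you defer but should actually carry out. First, the ``algebraic reconciliation'' does not close: your optimized exponent $\exp(-\mu + x - x\log(x/\mu))$, with $\mu=\alpha/\beta$, expands to $\exp\bigl[-\alpha/\beta + x - x\log x - x\log(\beta/\alpha)\bigr]$, whereas the display in (i) reads $\exp\bigl[-\alpha/\beta + x\,\beta/\alpha - x\log x - x\log(\beta/\alpha)\bigr]$. The coefficient of the linear term is $1$ in your version and $\beta/\alpha$ in the paper's; they agree only when $\alpha=\beta$. Your form is the one the Chernoff minimization actually produces (and matches the paper's own intermediate line $-\theta x+(e^\theta-1)\alpha/\beta$ evaluated at $\theta=\log(x\beta/\alpha)$), so the displayed bound in (i) appears to carry an algebra slip; for $\alpha>\beta$ it is strictly tighter than what the argument proves. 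Second, your mean estimate $\EE[D_n^{\mathsf{in}}(m)]\leq(\alpha/\beta)(1+O(1/m))$ strictly exceeds $\alpha/\beta$ for every finite $m$ (the $l=m$ summand of $\alpha m^\beta\sum_{l\geq m}l^{-\beta-1}$ already contributes $\alpha/m$), so it does not by itself license $\EE[e^{tD_n^{\mathsf{in}}(m)}]\leq\exp\bigl((\alpha/\beta)(e^t-1)\bigr)$ uniformly in $m$: the Chernoff step requires $\sum_j p_j(m)\leq\mu$, not $\leq\mu(1+o(1))$. You would need to either carry the $O(1/m)$ correction through the exponent or restrict the statement accordingly. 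Neither issue affects the maximum-degree conclusion, which uses only the super-polynomial decay of the tail at $x=C\log N$.
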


Let us now discuss the connectivity of the YAPA network. The \emph{forward component} of $m\in [n-1]$ after the arrival of vertex $n$, $1\leq m \leq n $ is given by
\[
C_n(m)=\{k: k\twoheadrightarrow m \text{ in } G_{n} \}\cup\{ m \},
\]
For the distributed ledger application discussed in Section~\ref{sec:intro}, it is desirable that the network be {rooted}, i.e.\ $C_n(1)=[n]$. Due to the stochastic nature of the attachment mechanism this is typically not the case but can formally be achieved by augmenting the network by some root vertex $\rho$ that deterministically connects to precisely those vertices which have outdegree $0$. Of course, the proportion of vertices which fail to connect to any existing vertices should be rather small, which is ensured by the light tails of the limiting Poisson distribution.

\begin{corollary}[Detached surface]\label{cor:detsurf}
	We have that
	$$\lim_{N\to\infty}\frac{|\{m\in\{2,\dots,N\}:D^{\mathsf{out}}(m)=0\}|}{N} = \textup{e}^{-\alpha/(\beta+1)} \text{ almost surely and in } L^1.$$
\end{corollary}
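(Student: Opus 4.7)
The plan is to read the conclusion off directly from Theorem~\ref{thm:typicaldegree}. First I would note that vertex $1$ has outdegree $0$ deterministically (there is nothing older for it to link to), so it is the only member of $\{1,\dots,N\}\setminus\{2,\dots,N\}$ that contributes to the count of outdegree-$0$ vertices. Hence
$$
\frac{|\{m\in\{2,\dots,N\}:D^{\mathsf{out}}(m)=0\}|}{N} \;=\; P^{\mathsf{out}}_0(N) - \frac{1}{N},
$$
where $P^{\mathsf{out}}_0(N)$ is the empirical proportion of vertices of outdegree $0$ in $G_N$.

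Next, I would invoke Theorem~\ref{thm:typicaldegree}: since $Z^{\mathsf{out}}\sim\operatorname{Poisson}(\alpha/(\beta+1))$, we have $p^{\mathsf{out}}_0=\operatorname{e}^{-\alpha/(\beta+1)}$. The total variation estimate \eqref{eq:totalvariation} then gives
$$
|P^{\mathsf{out}}_0(N)-p^{\mathsf{out}}_0|\;\leq\;\mathsf{d}_{\textup{TV}}(P^{\mathsf{out}}(N),p^{\mathsf{out}})\;=\;O\bigl(\sqrt{(\log N)^3/N}\bigr)\quad\text{a.s.},
$$
so combining with the trivial $O(1/N)$ correction yields almost sure convergence of the displayed ratio to $\operatorname{e}^{-\alpha/(\beta+1)}$.

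Finally, to upgrade to $L^1$, I would simply note that the random variable on the left hand side is a proportion bounded by $1$, so the bounded convergence theorem promotes the almost sure convergence to convergence in $L^1$. There is no substantial obstacle here: the corollary is a direct consequence of Theorem~\ref{thm:typicaldegree}, and the only bookkeeping is accounting for the single deterministically-zero-outdegree vertex $1$, which contributes an asymptotically negligible $1/N$ shift.
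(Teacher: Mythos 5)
Your proof is correct, but it takes a different route from the paper's. You obtain almost sure convergence by feeding the estimate $|P^{\mathsf{out}}_0(N)-p^{\mathsf{out}}_0|\leq \mathsf{d}_{\textup{TV}}(P^{\mathsf{out}}(N),p^{\mathsf{out}})$ into the total-variation error bound \eqref{eq:totalvariation} of Theorem~\ref{thm:typicaldegree}, and then promote to $L^1$ by bounded convergence; the bookkeeping with vertex $1$ being the unique deterministic outdegree-$0$ vertex is also handled correctly. The paper instead avoids the (harder, and only sketched for outdegrees) TV bound entirely: it computes the per-vertex probability $p'_k=\prod_{i<k}(1-p_k(i))$ that vertex $k$ has outdegree $0$, shows $p'_k\to \textup{e}^{-\alpha/(\beta+1)}$, and then observes that the outdegree indicators are \emph{independent across vertices} (each $D^{\mathsf{out}}(k)$ is fixed at the time $k$ is born and depends only on arcs drawn by $k$), so that Kolmogorov's Strong Law of Large Numbers gives almost sure convergence directly; the $L^1$ statement is then read off from the distributional/$L^1$ convergence in Theorem~\ref{thm:typicaldegree}. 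Your argument is shorter given the TV bound as a black box, but it leans on the most technically demanding piece of Theorem~\ref{thm:typicaldegree}, whereas the paper's argument is more elementary and exposes the independence structure that really drives the almost sure statement. Both are valid.
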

While there is still a positive fraction of nodes with no left neighbors, this fraction is not very large: for instance, if we would like to make sure, that with probability at least $.995$ a typical vertex sends an arc to at least one vertex, it suffices to choose $\alpha>\log(200)(\beta+1)\approx 5.3 (\beta+1)$.

Next, we would like to ensure that the vast majority of vertices which do connect upon arrival in the system is contained in the forward component of only a handful of vertices. This can only be achieved, if there exist vertices $m$ such that $C_N(m)$ grows linearly with $N$. Our first result concerning such macroscopic components states that the forward component of a typical vertex is not of linear order.
\begin{proposition}[No giant forward components for typical vertices]\label{prop:nogiant}
	Let $U\in[N]$ be uniformly chosen, then for any $\varepsilon>0$, we have
	\[
	\lim_{N\to\infty}\PP(|C_N(U)|\geq \varepsilon N)=0.
	\]
\end{proposition}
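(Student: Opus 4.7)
The plan is to bound $\EE[|C_N(U)|]$ by $o(N)$ and then apply Markov's inequality. Setting $f_N(m) := \EE[|C_N(m)|]$ and conditioning on $U$, we have $\EE[|C_N(U)|] = N^{-1}\sum_{m=1}^N f_N(m)$, so the task reduces to showing $\sum_m f_N(m) = o(N^2)$.

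The central tool is the recursive inequality
\[
f_N(m) \leq 1 + \sum_{k=m+1}^N p_{km}\,f_N(k), \qquad p_{km} := \frac{\alpha m^\beta}{(k-1)^{\beta+1}} \wedge 1,
\]
which I would derive from the set identity $C_N(m) = \{m\} \cup \bigcup_{k>m,\,k\to m} C_N(k)$ (passing to a union bound on cardinalities) together with the independence of the Bernoulli variable attached to the single edge $k\to m$ from the edges on the vertex set $\{k,k+1,\dots,N\}$, the latter being the only edges entering the definition of $|C_N(k)|$.

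The main step is to establish a polynomial upper bound $f_N(m) \leq K\,(N/m)^\delta$ for some $\delta=\delta(\alpha,\beta)>0$ and $K=K(\alpha,\beta)$, by reverse induction on $m$ starting from $f_N(N)=1$. Inserting the ansatz into the recursion and replacing the $k$-sum by its integral analog, the dominant contribution is
\[
\sum_{k=m+1}^N p_{km}\,K\bigl(N/k\bigr)^{\!\delta} \;\lesssim\; \frac{c\,\alpha\,K}{\beta+\delta}\,\bigl(N/m\bigr)^{\!\delta},
\]
with a universal constant $c$ absorbing the integral-approximation error; choosing $\delta$ large enough that $c\alpha/(\beta+\delta)<1$ closes the induction for a suitably large $K$. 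Combining the polynomial bound with the trivial estimate $f_N(m)\leq N-m+1$ and splitting the sum at the crossover $m_\star \sim N^{1-1/\delta}$ yields $\sum_m f_N(m) = O(N^{2-1/\delta})$. Dividing by $N$ gives $\EE[|C_N(U)|] = O(N^{1-1/\delta}) = o(N)$, and Markov's inequality concludes.

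The main technical obstacle is making the inductive step uniform in $m\in[N]$: for small $m$ the edge probabilities $p_{km}$ may be truncated at $1$, and for $k$ close to $m$ the sum $\sum_k (k-1)^{-\beta-1}k^{-\delta}$ is not accurately captured by $\int_m^\infty x^{-\beta-1-\delta}\,dx$. Each of these effects contributes only a bounded multiplicative overhead, which can be absorbed into the constants $c$ and $K$, but verifying that a single choice of $K$ works for all $m\in[N]$ and all $N$ requires some routine bookkeeping. No new conceptual ingredient seems to be needed.
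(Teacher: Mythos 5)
Your argument is correct, and it follows a genuinely different route from the one in the paper. The paper's proof invokes the local weak limit framework of Cao and Olvera--Cravioto: the forward exploration from a uniform vertex is coupled to the multitype Galton--Watson process of Remark~\ref{rmk:locallimit}, whose types strictly increase along any lineage and hence drift towards $1$, where the offspring mean vanishes; almost sure extinction of that branching process then yields the proposition via a standard transfer result (\cite[Corollary 2.27]{hofstad_book2}). Your proposal bypasses the branching-process machinery entirely and works directly with the first moment. The key recursion $f_N(m)\leq 1+\sum_{k>m}p_{km}f_N(k)$ is indeed valid, because $C_N(m)=\{m\}\cup\bigcup_{k\to m}C_N(k)$ and the Bernoulli variable for the single edge $k\to m$ is independent of $C_N(k)$, which is measurable with respect to edges among $\{k,\dots,N\}$ only. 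The reverse induction with the ansatz $f_N(m)\leq K(N/m)^\delta$ closes once $\delta$ is taken large enough: observe $p_{km}\leq \alpha m^\beta/(k-1)^{\beta+1}$ even after truncation, so
\[
\sum_{k>m}p_{km}\Bigl(\tfrac{m}{k}\Bigr)^{\delta}\;\leq\;\alpha m^{\beta+\delta}\Bigl(\tfrac{m+1}{m}\Bigr)^{\beta+1}\sum_{k>m}k^{-\beta-1-\delta}\;\leq\;\frac{\alpha}{\beta+\delta}\Bigl(\tfrac{m+1}{m}\Bigr)^{\beta+1}\;\leq\;\frac{\alpha\,2^{\beta+1}}{\beta+\delta},
\]
which is $<1$ uniformly in $m$ once $\delta>\alpha 2^{\beta+1}-\beta$, and the remaining $+1$ is absorbed by taking $K$ large. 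Your split at $m_\star\asymp N^{1-1/\delta}$ and Markov then give the claim. Compared to the paper's argument, your approach is more elementary and self-contained (it avoids the branching-process coupling and the abstract transfer theorem), and it additionally produces an explicit polynomial rate $\PP(|C_N(U)|\geq\varepsilon N)=O(\varepsilon^{-1}N^{-1/\delta})$; the paper's route, on the other hand, identifies the local limit structure explicitly, which is reused in the discussion around Remark~\ref{rmk:locallimit}.
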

\begin{remark}\label{rmk:locallimit}
As for Theorem~\ref{thm:typicaldegree}, the neighborhood around a typically chosen vertex can be described by the local limit approach used in \cite{CaoOlver20}. It is not hard to see, that the weak local limit of $C_N(U)$ is in fact the family tree obtained from the multitype Galton-Watson-Process given by the following recursion:
\begin{itemize}
	\item Sample $V\in[0,1]$ uniformly. Generation $0$ of the tree comprises the root, which has type $V$.
	\item For any $k\geq 1$, given generation $k-1$ of the tree, each member $m$ in generation $k-1$ of type $V_m$ independently produces a \textsf{Poisson}($\alpha(1-V_m^\beta)/\beta$)- distributed number of offspring to obtain the $k$-th generation and each of the offspring thus produced is independently assigned a type drawn uniformly from $[V_m,1]$.
\end{itemize}
\end{remark}

Contrary to the local behavior near a typical vertex, if $\alpha>\beta+1$, most vertices indeed belong to the forward components of a few old vertices. More precisely, let $n_0=\lfloor 1/\alpha \rfloor$, then the vertices $1,\dots,n_0$ are always sequentially connected by construction and we thus focus on the forward components of vertices added after $n_0$.
\begin{theorem}[Existence of giant forward components]\label{thm:giant}
	Let $m>n_0$ be given. Then almost surely
	\[
	\lim_{N\to\infty}\frac{|C_N(m)|}{N}=\begin{cases}
	0, &\text{if }\alpha\leq \beta+1,\\
	Z(m), & \text{if }\alpha> \beta+1,
	\end{cases}
	\]
	where $Z(m)$ is a non-trivial random variable that is concentrated on the two solutions $0$ and $\gamma>0$ of the fixed point equation
	\[
	x = 1-\textup{e}^{-\frac{\alpha}{\beta+1}x}, \quad x\in[0,\infty).
	\]
\end{theorem}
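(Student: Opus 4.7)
My plan is to work with the joint evolution of $X_n := |C_n(m)|$ and the weighted sum $S_n := \sum_{j \in C_n(m)} j^\beta$. Because the edges emanating from vertex $n$ are drawn independently, the conditional probability that $n$ enters $C_n(m)$ given $\mathcal{F}_{n-1} := \sigma(G_{n-1})$ is
\[
q_{n-1} \;=\; 1 - \prod_{j \in C_{n-1}(m)}(1 - p_{n,j}) \;=\; 1 - \exp\!\Bigl(-\tfrac{\alpha}{(n-1)^{\beta+1}} S_{n-1}\Bigr) + O(n^{-1}),
\]
using that each $p_{n,j}$ is of order $1/n$. Membership is absorbing since $G_n \subset G_{n+1}$, so $(X_n, S_n)$ is a monotone Markov-like process. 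The self-consistency heuristic for the fixed-point equation is: if $X_n \sim x n$ with the vertices of $C_n(m)$ spread uniformly on $[1, n]$, then $S_n \sim x n^{\beta+1}/(\beta+1)$ and plugging this into $q_{n-1}$ forces $x = 1 - e^{-\alpha x/(\beta+1)}$, which is exactly the theorem's fixed-point equation with nontrivial root $\gamma > 0$ precisely when $\alpha > \beta + 1$.

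For the subcritical regime $\alpha \le \beta + 1$, I would apply a first-moment argument. Using $1 - e^{-y} \le y$, the recursion gives $\EE[S_n] \le \EE[S_{n-1}](1 + \alpha n^\beta/(n-1)^{\beta+1})$ and hence $\EE[S_n] = O(n^\alpha)$, so $\EE[S_n]/n^{\beta+1} = O(n^{\alpha-\beta-1}) \to 0$ for $\alpha < \beta + 1$ (a second-order Taylor expansion of the exponential handles the critical case $\alpha = \beta + 1$, in which the surviving nonlinear drift still forces the rescaled weighted sum to $0$). Consequently $\EE[q_n] \to 0$ and $\EE[X_N/N] \to 0$; a Freedman-type martingale inequality applied to $M_N := X_N - \sum_{n \le N} q_{n-1}$ upgrades this to almost-sure convergence.

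For the supercritical regime $\alpha > \beta + 1$, I pass to the normalized coordinates $(\xi_n, \sigma_n) := (X_n/n, S_n/n^{\beta+1})$ and write their Doob decompositions. Elementary manipulation yields
\[
\EE[\xi_n - \xi_{n-1} \mid \mathcal{F}_{n-1}] = \tfrac{1}{n}(q_{n-1} - \xi_{n-1}), \qquad \EE[\sigma_n - \sigma_{n-1} \mid \mathcal{F}_{n-1}] = \tfrac{1}{n}\bigl(q_{n-1} - (\beta+1)\sigma_{n-1}\bigr) + O(n^{-2}),
\]
so that on the logarithmic time scale $t = \log n$ these recursions form a stochastic approximation scheme for the ODE system
\[
\dot\xi = (1 - e^{-\alpha \sigma}) - \xi, \qquad \dot\sigma = (1 - e^{-\alpha \sigma}) - (\beta+1)\sigma,
\]
whose trivial equilibrium $(0, 0)$ is unstable (its Jacobian has eigenvalue $\alpha - \beta - 1 > 0$) and whose nontrivial stable equilibrium is $(\gamma, \gamma/(\beta+1))$. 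A standard stochastic-approximation shadowing argument combined with Freedman-type concentration then shows that $(\xi_n, \sigma_n)$ converges almost surely to one of these two equilibria.

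The dichotomy $Z(m) \in \{0, \gamma\}$ arises because the early dynamics (while $X_n$ is of constant order and the fluid approximation is not yet informative) can be coupled to a Galton-Watson-type exploration in the spirit of Remark~\ref{rmk:locallimit}: with positive probability this exploration dies out, pinning $X_n$ at $O(1)$ so $\xi_n \to 0$; on the complementary event the process escapes the basin of $(0, 0)$ and the ODE flow drives $(\xi_n, \sigma_n) \to (\gamma, \gamma/(\beta+1))$. The main technical obstacle is closing the self-consistent relation $\sigma_n \sim \xi_n/(\beta+1)$ uniformly in $n$, since the set $C_n(m)$ is not a priori uniformly spread on $[1, n]$. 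I would address this by tracking the empirical measure $\mu_n := n^{-1} \sum_{j \in C_n(m)} \delta_{j/n}$ and showing via a companion analysis that on the survival event $\mu_n$ converges weakly to $\gamma$ times Lebesgue measure on $[0, 1]$; this closes the two-variable fluid limit and yields the sharp density statement underlying the theorem.
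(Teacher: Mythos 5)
Your proposal follows essentially the same route as the paper's proof. The paper tracks the weight process $W_n(m)=w_n(C_n(m))$, which equals $\alpha\,n^{-\beta}S_n$ in your notation, so $W_n/n=\alpha\sigma_n$; the paper then shows $W_n/n$ is, up to controllable error, an autonomous one-dimensional stochastic approximation and applies a Pemantle-type lemma to conclude a.s.\ convergence to one of the two roots of $1-\textup{e}^{-y}=\tfrac{\beta+1}{\alpha}y$. This is precisely your $\dot\sigma$-equation and fixed point. The translation back to $\Gamma_n/n$ is then made through the averaged connection probability $q_n$, which is the integrated form of your $\dot\xi$-equation. Where the paper differs is the packaging of error terms: rather than carrying the $O(n^{-1})$ corrections to the exponent of $q_{n-1}$ through the stochastic-approximation argument, the paper sandwiches $W_n/n$ between two genuinely Markovian auxiliary chains $Y_n(m;a,b,y)$ (Lemmas~\ref{lem:dominationlowerbound} and~\ref{lem:dominationupperbound}) and runs the whole analysis on $Y_n$, which keeps every estimate exact and makes the subcritical and critical boundary cases very short (a supermartingale argument for $a<b+1$, and a squeeze with parameters $a\downarrow\alpha$, $b\uparrow\beta$ for $\alpha=\beta+1$).

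Two specific points. First, the ``main technical obstacle'' you flag --- closing the relation $\sigma_n\sim\xi_n/(\beta+1)$ via an empirical-measure companion analysis --- is not actually needed. As you yourself write, the increment of $S_n$ is $n^\beta\mathsf{1}\{n\in C_n(m)\}$ and the conditional connection probability depends (up to a summable $O(1/n)$ error) only on $S_{n-1}$, so your $\sigma$-recursion is already closed; the $\xi$-recursion reads off $\lim\xi_n=1-\textup{e}^{-\alpha\sigma_\ast}=(\beta+1)\sigma_\ast$ once $\sigma_n\to\sigma_\ast$. No statement about the spatial spread of $C_n(m)$ in $[1,n]$ is required, only a uniform $O(1/n)$ bound on the multiplicative error in $q_{n-1}$, which follows from $p_{n,j}\leq\alpha/(n-1)$. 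Second, your argument that the survival probability is positive (``couple the early dynamics to a GW exploration'') is the least developed part of the sketch: one has to get the exploration from $O(1)$ size up to a scale where the fluid approximation takes over, and this intermediate-scale control is the usual sticking point in giant-component arguments. The paper sidesteps it entirely by a drift contradiction internal to the stochastic approximation: if $Y_n\to 0$ a.s., then $\EE[Y_n\mid\mathcal{F}_k]$ eventually satisfies $\Delta\EE[Y_n\mid\mathcal{F}_k]\geq\tfrac{\zeta}{n+1}\EE[Y_n\mid\mathcal{F}_k]$ with $\zeta=\alpha-\beta-1-\varepsilon>0$, which forces $\EE[Y_n\mid\mathcal{F}_k]\to\infty$, a contradiction. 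If you pursue your GW route you should expect to supply the missing intermediate-scale argument; otherwise, consider adopting the drift argument, which is shorter and fits directly into the framework you have already set up.
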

Theorem~\ref{thm:giant} is the main result of this article. The apparently difference in the characterisations obtained in Proposition~\ref{prop:nogiant} and Theorem~\ref{thm:giant}, respectively, are due to the reducibility of the graph -- a typical vertex has age of order $N$ and can simply not collect enough direct and indirect references up to time $N$ to develop a large forward component. However, if an index is kept fixed and the time horizon is extended to infinity, with positive probability, the selected vertex will be the root of a forward component of linear size eventually. If we were to ignore the direction of the arcs and consider $G_N$ as an undirected graph, then it is well-known that the limit $\lim_{N\to\infty}|C_N(U)|/N$ in Proposition~\ref{prop:nogiant} would be positive whenever $\alpha>\beta+1$, c.f.\ \cite{BolloJansoRiord07}.

\section{Related work}
\subsection*{Design of Tip Selection for DAG-type Distributed Ledgers }\label{sec:tangle_implications}

DAG-type Distributed Ledgers such as IOTA~\cite{popov2016tangle} utilize different algorithms for achieving consensus. State-of-the-art DAG-type DLT repurpose the act of selecting transactions for validation, i.e., constructing arcs from an arriving node to previous ones, for achieving consensus. In a nutshell, the tip selection algorithm in \cite{popov2016tangle} is based on a biased random walk and that a transaction which is directly or indirectly validated by a large portion of the subsequent transactions is highly probable to be considered valid.
Precisely, a tip on the surface of the DAG is selected based on a random walk from genesis to the surface. This random walk is biased in the sense that nodes possess a weight that resembles the number of subsequent nodes that are directly or indirectly attached to this node and that the jump probability of the random walk from a node $i$ with weight $w_i$ to a node $j$ with weight $w_j$ is $e^{-\eta(w_i-w_j)}$  with a given constant $\eta>0$.
For finite time the  value of the constant $\alpha$ may play a role to adjust the tip selection to mimic a uniformly random tip selection or to quickly concentrate the tip selection on a set of young nodes.
However, this algorithm is different from our YAPA model as it concentrates the tip selection on certain walks where the node weights are large.

Further works such as \cite{Kusmierz2018,Ferraro2020} analyze different properties of the DAG-type Distributed Ledgers especially the IOTA DAG, that is denoted the Tangle.
In \cite{Kusmierz2018} the authors derive approximations of the probabilities that a node is left behind or that a node becomes a permanent tip. Specifically, the first approximation of the probability that a node is left behind is similar to the detached surface evolution in Corollary \ref{cor:detsurf}.
The authors of \cite{Ferraro2020} provide a modified IOTA tip selection algorithm that is based on a combination of two Markov Chain Monte Carlo algorithms with different parameters that aims that all nodes are validated. They prove this property using a fluid limit in the arrival rate of transactions.

In Coordicide \cite{Coordicide2020}, which is a follow up work on IOTA constituting a modern DAG-type DLT, tip selection is fixed to ``\emph{validating two existing transactions}''.
Here, the authors dispense with tip selection as a consensus mechanism and rather consider it from a performance point of view, i.e., analyzing the impact of the tip selection on the DAG properties, especially, in discouraging ``lazy'' behavior which resembles the recurring validation of a fixed set of old transactions.

We consider this work in the same vein, i.e., our YAPA model provides a relationship between the parameters of the tip selection algorithm, i.e., \emph{the edge density  $\alpha$ and the reinforcement bias $\beta$}, and the DAG distributed ledger properties.
We establish the YAPA model in \eqref{eq:attachment_probability} as a rule to encourage the validation of young transactions.
From the calculation of the asymptotic distribution of the outdegree $Z^{\mathsf{out}}$ (which turns out to be $\operatorname{Poisson}(\alpha/(\beta+1))$) as well as the non-asymptotic expected outdegree at a given time point $\mathsf{E}\left[D_k^{\mathsf{out}}\right]$ we know the expected number of transactions that are validated given parameters $\alpha,\beta$.
Further, the  reinforcement bias $\beta>0$ is understood as a forgetting factor, precisely, a parameter to set the preference for validating young transactions, i.e., larger  $\beta>0$ leads to a stronger preference of younger transactions over older ones.
Hence, by choosing $\alpha,\beta$ while ensuring $\alpha > \beta +1$ the properties of the distributed ledger can be controlled. As is reflected in our results, we study the following properties: \emph{(i)} the emergence of a `giant' components, i.e., a linear increase of a the number of direct and indirect validations that a transaction receives over time, \emph{(ii)} the average number of direct validations received by a transaction over time, and \emph{(iii)} the number of orphan transactions.

\subsection*{Related literature about DLTs}\label{sec:related_work}

Blockchain technology is based on the following fundamental principles, (i) Decentralization: Due to its distributed architecture, a blockchain does not rely on a central server, but rather stores data in a decentralized manner and can be updated by any node in the network. (ii) Transparency: each node (peer) has a local copy of the blockchain and updates are  sent as broadcast to all nodes. Therefore, any change to the blockchain is transparent to all members of the network. (iii) Open source: the original blockchain protocol introduced with Bitcoin is publicly available and can be used for development. (iv) Autonomy: all peers can update or transmit the stored data. Each node can verify the accuracy of all data received. This eliminates the need for trust between participants. (v) Immutability: once a transaction block is part of the blockchain, it is stored publicly forever and its contents (i.e., transactions) cannot be edited without changing the hash value stored in all subsequent blocks. (vi) Anonymity: Data transmission in the blockchain is done via addresses, while user identities remain hidden.

Current blockchain-based cryptocurrencies are often presented as an alternative to traditional currencies. However, there are questions about the scalability and performance of distributed ledgers compared to established payment systems~\cite{croman2016scaling,pappalardo2017blockchain}. Currently, the Bitcoin network is only capable of processing very few transactions per second \cite{blockchain2018tps} and it takes up to 20 minutes for a newly posted transaction to be processed \cite{blockchain2018act}. Established payment processors, on the other hand, process up to 4 orders more transactions per second and new transactions are usually processed within a few seconds. A significant influence on the performance of the globally distributed peer-to-peer Bitcoin system \cite{donet2014bitcoin} is the propagation speed of the blocks in the network \cite{decker2013information}. For example, for a 1MB block, which is roughly the average block size \cite{blockchain2018abs}, it takes about 2.4 minutes \cite{croman2016scaling} for 90\% of the nodes to receive this block. With the block interval of 15 minutes used by the Bitcoin network, this poses a significant problem, which increasingly leads to the splitting of the blockchain - the so-called blockchain forks \cite{decker2013information}. The emergence of such forks significantly decreases the performance of the distributed ledger system.

Although approaches already exist to optimize the propagation speed of information in the distributed ledger network \cite{fadhil2016bitcoin,fadhil2017locality}, these also do not reach the performance of current financial transaction systems. In addition to optimizing the propagation speed, other improvements arise in the communication protocol used \cite{gobel2017increased, croman2016scaling}. One way to optimize the current Bitcoin protocol in terms of transmission speed is to minimize unnecessary transmissions by adjusting the sequence of exchanged messages needed to transmit new blocks. Instead of first publishing the availability of information, blocks that do not exceed a certain threshold in terms of size can thus be distributed directly to neighboring nodes. This can increase the effective transmission speed, especially for small blocks where the protocol overhead has a particularly large impact. The authors of \cite{decker2013information} have shown that such protocol adaptations can reduce blockchain forks by about 50\%.

To increase scalability and transaction throughput, nonlinear graph-based distributed ledgers \cite{yeow2017decentralized,sompolinsky2016spectre,popov2016tangle,lewenberg2015inclusive,sompolinsky2020phantom} replace the traditional blockchain with a global directed acyclic graph (DAG) that contains all recorded transactions. In this graph, the nodes are transaction blocks (consisting of one or more transactions), while the edges indicate the validation of previous blocks / transactions. Each added node should validate $k \ge 1$ transactions where $k$ can be fixed of random. A key advantage of DAG-based systems is that by deviating from the linear longest blockchain rule, scalability can be greatly improved. The longest blockchain rule essentially requires all nodes to know about newly created blocks quickly, thus limiting the creation of blocks to allow for the propagation of this knowledge before a new block is created. Instead of trusting the longest chain rule as given in blockchain distributed ledgers, DAG-based distributed ledgers rely on metrics such as the largest strongly connected cluster of transaction nodes in the transaction graph to determine the trusted transactions~\cite{sompolinsky2020phantom}.

Beyond financial transactions there exit many application areas for distributed ledgers, for example, in the context of distributed and secure storage of personal data~\cite{zyskind2015decentralizing}  or for asynchronous messaging services in the context of distributed computing~\cite{yin2018hyperconnected} . Especially the former use case is of high relevance due to increasingly frequent incidents in which personal data is publicly accessible due to security breaches or data misuse. Here, the use of distributed ledgers as a system for distributed access control offers a potential solution. Instead of storing data centrally and granting direct access, queries are mapped as well as sharing information via transactions. Combined with off-chain storage of sensitive data, it is thus possible to develop a distributed ledgers system in which each user retains full control over their own data. In addition to increased security, this has the advantage that laws and other regulations can be implemented directly in the protocol used in the form of policies~\cite{zyskind2015decentralizing}.

\subsection*{Related mathematical results}
Our model is similar in spirit to the one introduced by Lyon and Mahmoud \cite{LyonMahmo20}, but with two important differences. Firstly, their model produces a tree, whereas our model produces a directed acyclic graph that is typically not a tree. Secondly, we modulate the attachment via the parameters $\alpha$ and $\beta$ to obtain a whole range of instances. This flexibility allows to tune the model to specific applications and also allows to observe the connectivity phase transition of Theorem~\ref{thm:giant}. The young-age preferential attachment tree of \cite{LyonMahmo20} corresponds to running our model with $\beta=1$ and conditioning on the creation of precisely one arc per time step. As explained in Section 2, our model can be analysed in the general framework of Cao and Olvera-Cravioto \cite{CaoOlver20} who adapted the very general framework of inhomogeneous random graphs developed by Bollob\'as et al.\ \cite{BolloJansoRiord07} to directed graphs, thereby generalising earlier results of Bloznelis et.\ al.\ \cite{BloznGoetzJawor12}. We use some techniques from the (directed) inhomogeneous random graph world to obtain the degree sequence and analyse the emergence of a giant connected component for our model in Proposition~\ref{prop:nogiant}. However, since $\kappa(\cdot,\cdot)$ is not reducible and we investigate connected components in terms of directed paths our main result Theorem~\ref{thm:giant} cannot be obtained by a direct application of the results in \cite{BloznGoetzJawor12,CaoOlver20}. We also emphasize the dynamic nature of our model which is not captured by the inhomogeneous random graph framework.

\medskip

The limiting case $\beta=0$ of the YAPA model corresponds to a directed version of the uniformly grown random graph investigated Bollob\'as et.\ al.\ in \cite{BolloJansoRiord04,BolloRiord04}, this is a very natural recursive model that was first studied by Dubins in the 1980's, cf.\ \cite{KalikWeiss88,Shepp89,DurreKeste90}. Although we exclusively consider positive values of $\beta$, choosing $\beta<0$ would lead to a preference for old-age vertices and produce an effect similar to degree-based preferential attachment models such as the Barab\'asi-Albert model \cite{AlberBarab99}. Models of this type have been amply investigated in the last decades, see e.g.\ \cite{BolloRiordSpencTusna01,DereiMoert09,DereiMoert13}. Old-age preferential attachment models produce graphs with entirely different features such as a scale-free degree distribution whereas the degrees in the YAPA model are fairly sharply concentrated around their expectation as is exemplified by Proposition~\ref{thm:degev}.

\section{Simulation results}\label{sec:simulation}

In the following, we  describe evaluation results that are based on simulating the model with attachment probabilities given by~\eqref{eq:attachment_probability}. Transaction blocks arrive according to a Poisson process with rate $\lambda=1$. We fix the reinforcement bias $\beta=2$ and variate the edge density parameter $\alpha$. Fig.~\ref{fig:component_size_alpha2} - Fig.~\ref{fig:component_size_alpha_var} show the evolution of the connected component size starting at the genesis node (transaction) for the first $10^5$ transactions for different $\alpha$. Note the linear growth rate of the connected component for $\alpha>\beta+1$.
The figures show the empirical average in addition to the standard deviation (shaded area) from $20$ independent simulation runs.

\begin{figure}
	\centering
	\begin{subfigure}[b]{0.49\textwidth}
		\centering
		\includegraphics[width=\textwidth]{./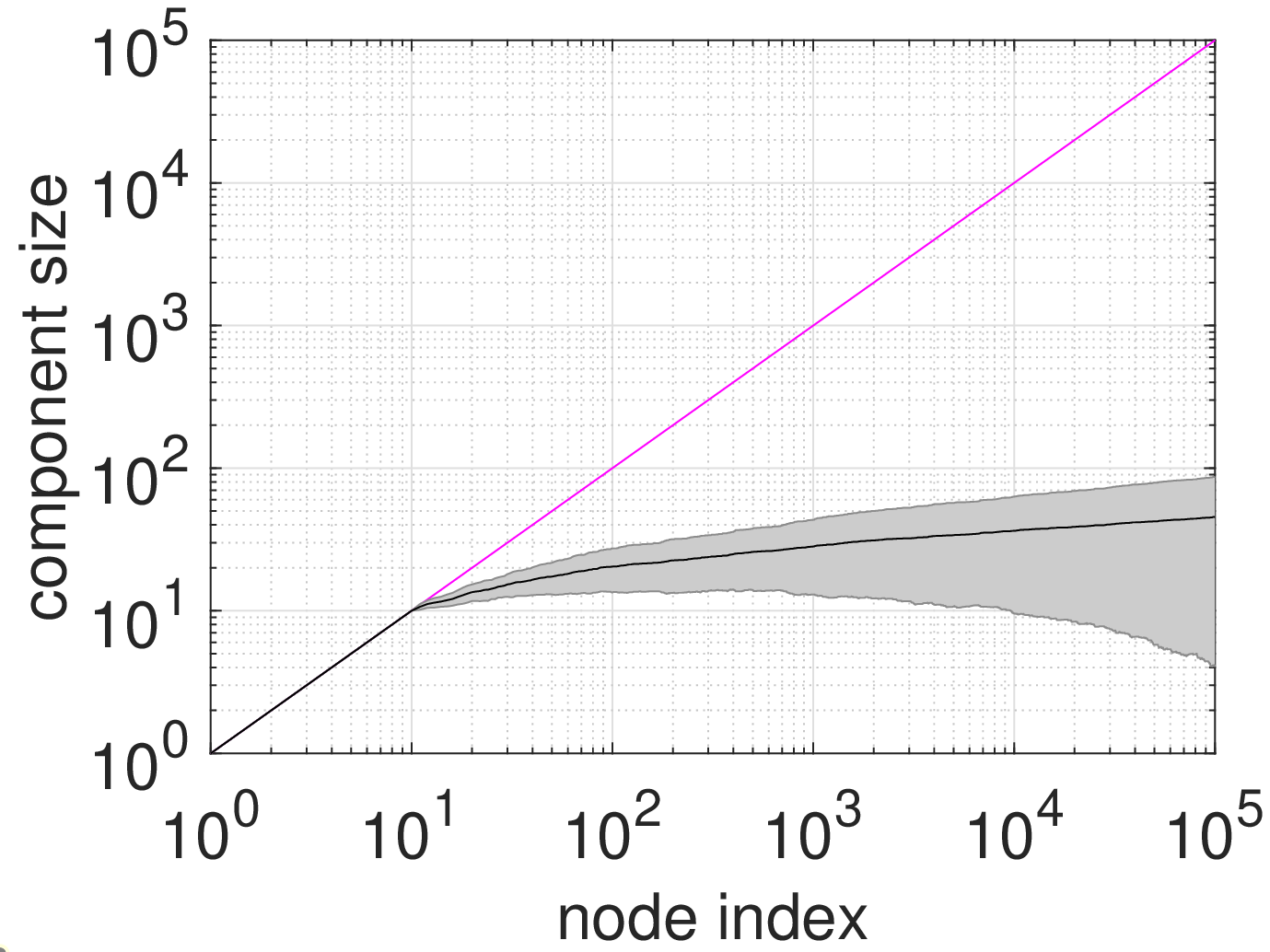}
		\caption{$\alpha=2$}
		\label{fig:component_size_alpha2}
	\end{subfigure}
	\begin{subfigure}[b]{0.49\textwidth}
		\centering
		\includegraphics[width=\textwidth]{./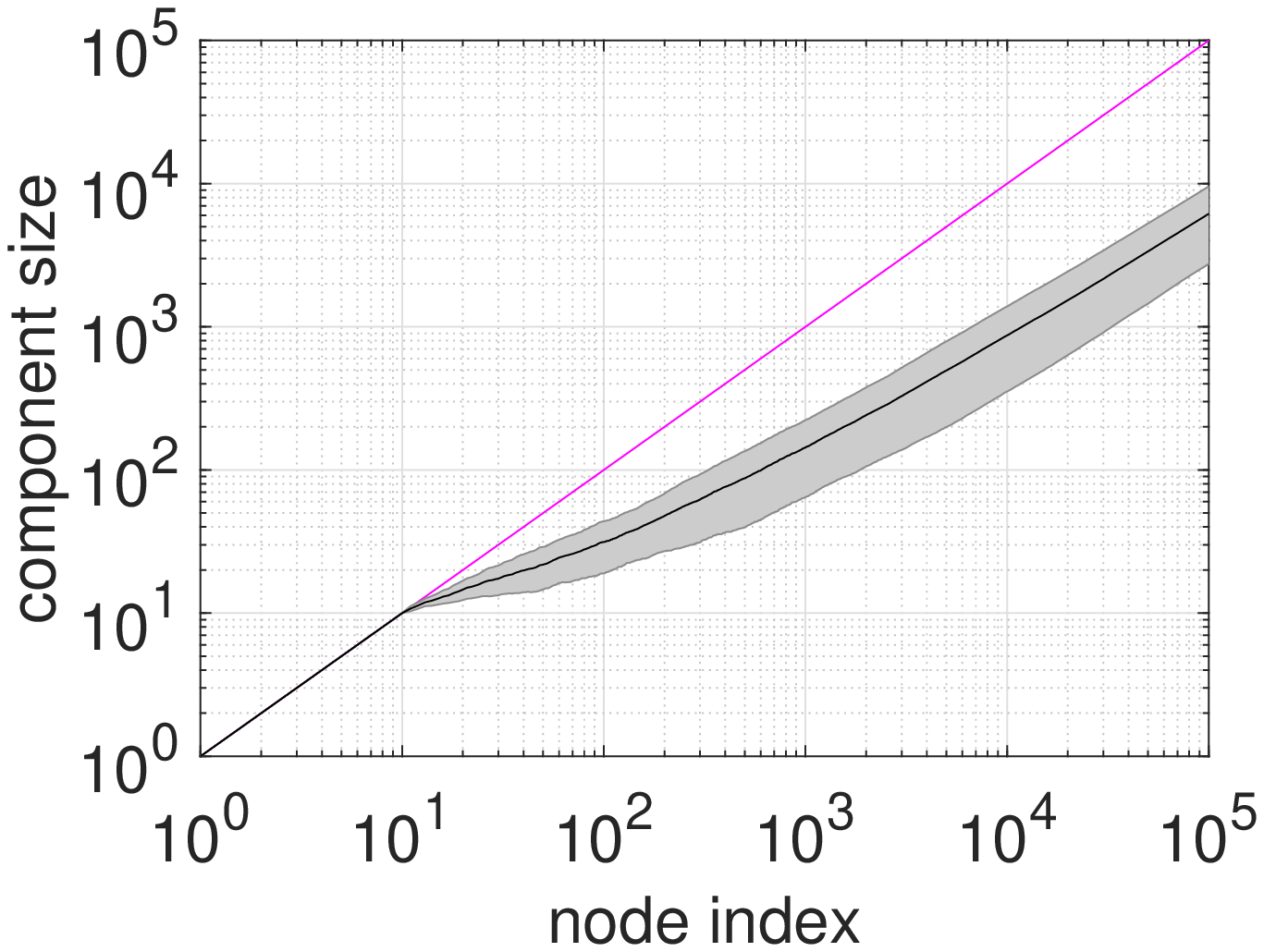}
		\caption{$\alpha=3$}
		\label{fig:component_size_alpha3}
	\end{subfigure}
	\begin{subfigure}[b]{0.49\textwidth}
		\centering
		\includegraphics[width=\textwidth]{./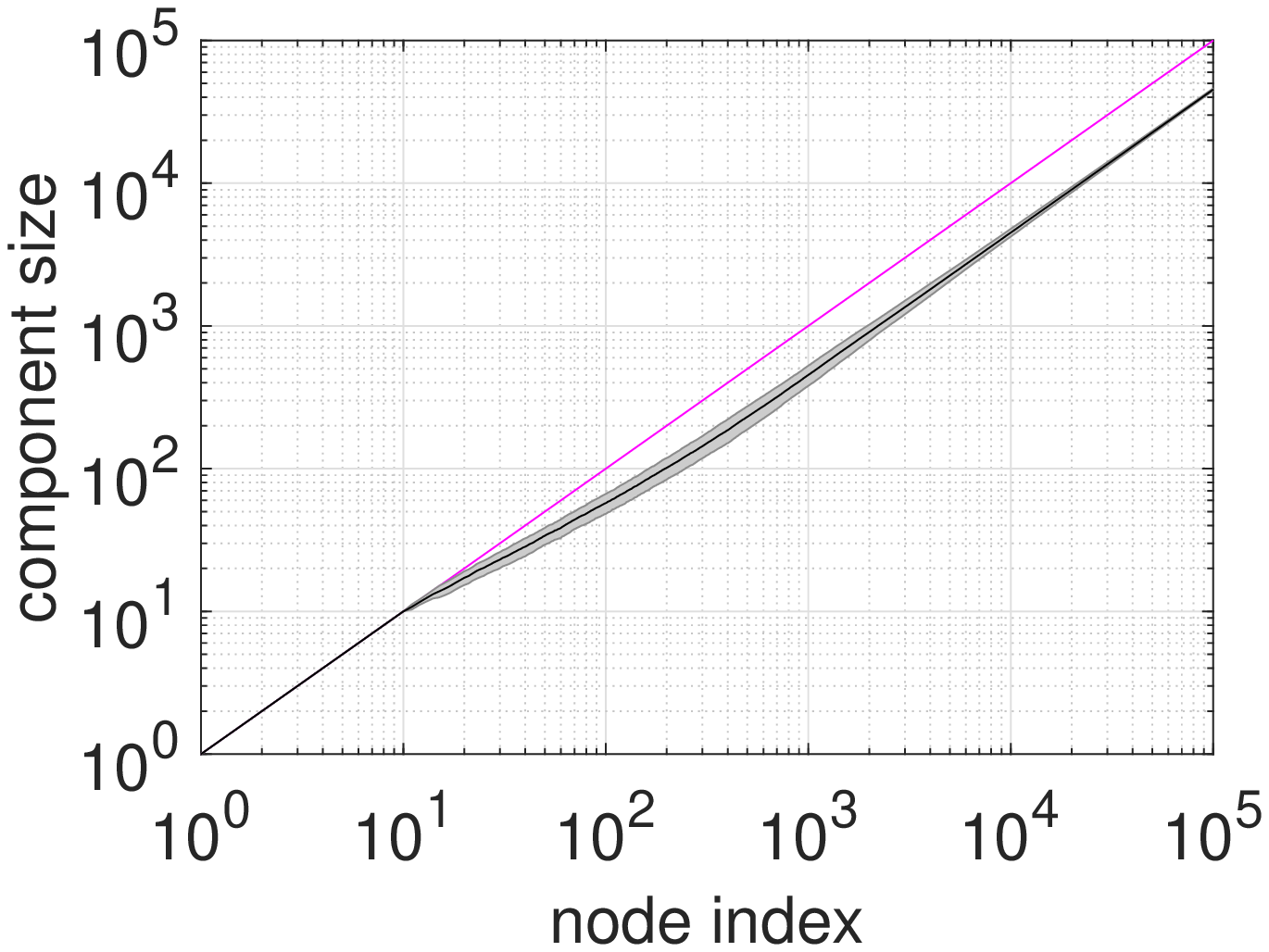}
		\caption{$\alpha=4$}
		\label{fig:component_size_alpha4}
	\end{subfigure}
	\begin{subfigure}[b]{0.49\textwidth}
		\centering
		\includegraphics[width=\textwidth]{./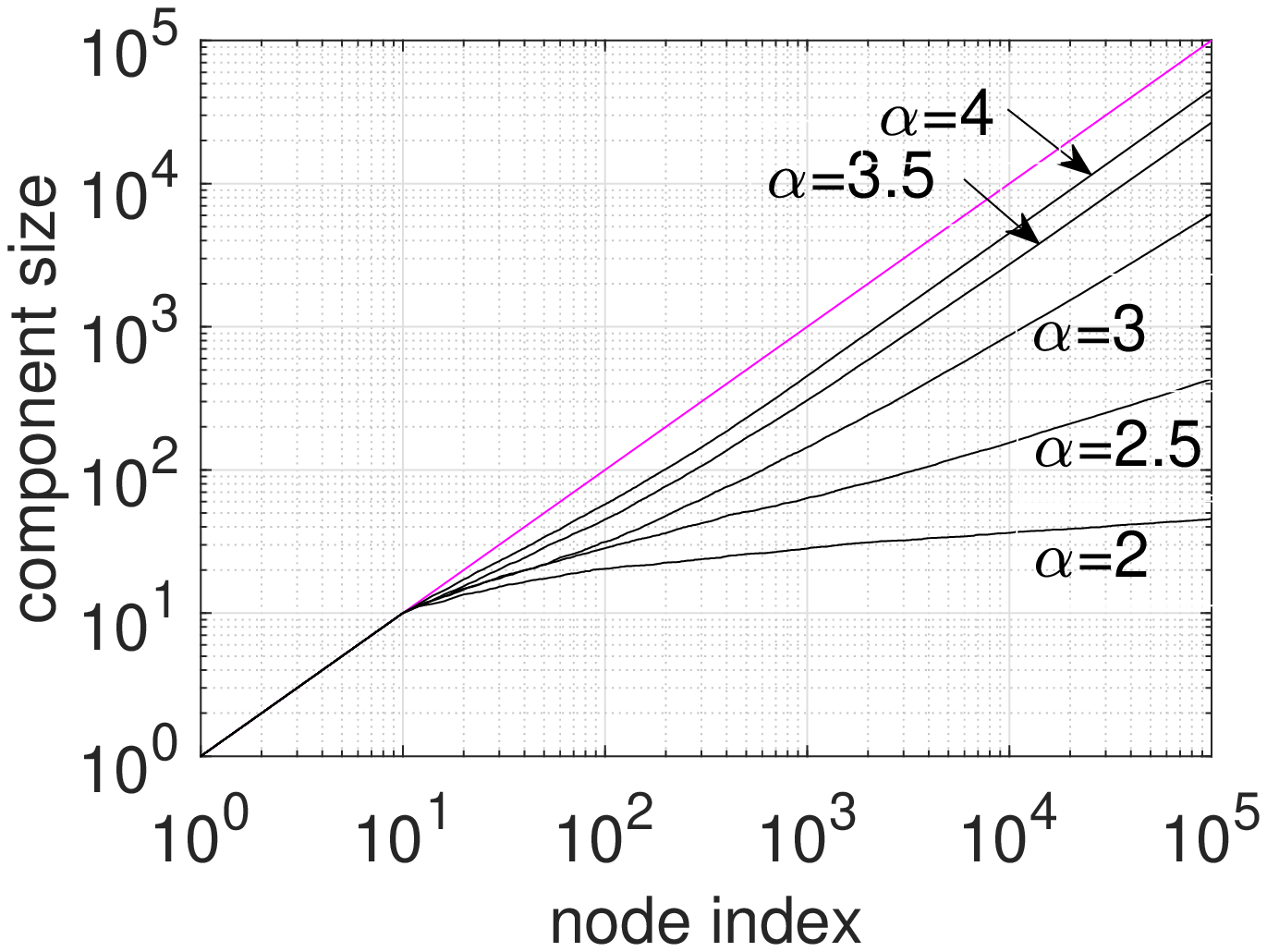}
		\caption{variable $\alpha$}
		\label{fig:component_size_alpha_var}
	\end{subfigure}
	\captionsetup{width=1.0\linewidth}
	\caption{Evolution of the connected component size (starting at the genesis node) for reinforcement bias $\beta=2$ and variable edge density parameter $\alpha\in\left\{2,3,4\right\}$. As an initial condition we assume the first $10$ nodes reference the genesis node.}
	\label{fig:component-size_beta2-alpha-var}
\end{figure}

\begin{figure}
	\centering
	\begin{subfigure}[b]{0.48\textwidth}
		\centering
		\includegraphics[width=\textwidth]{./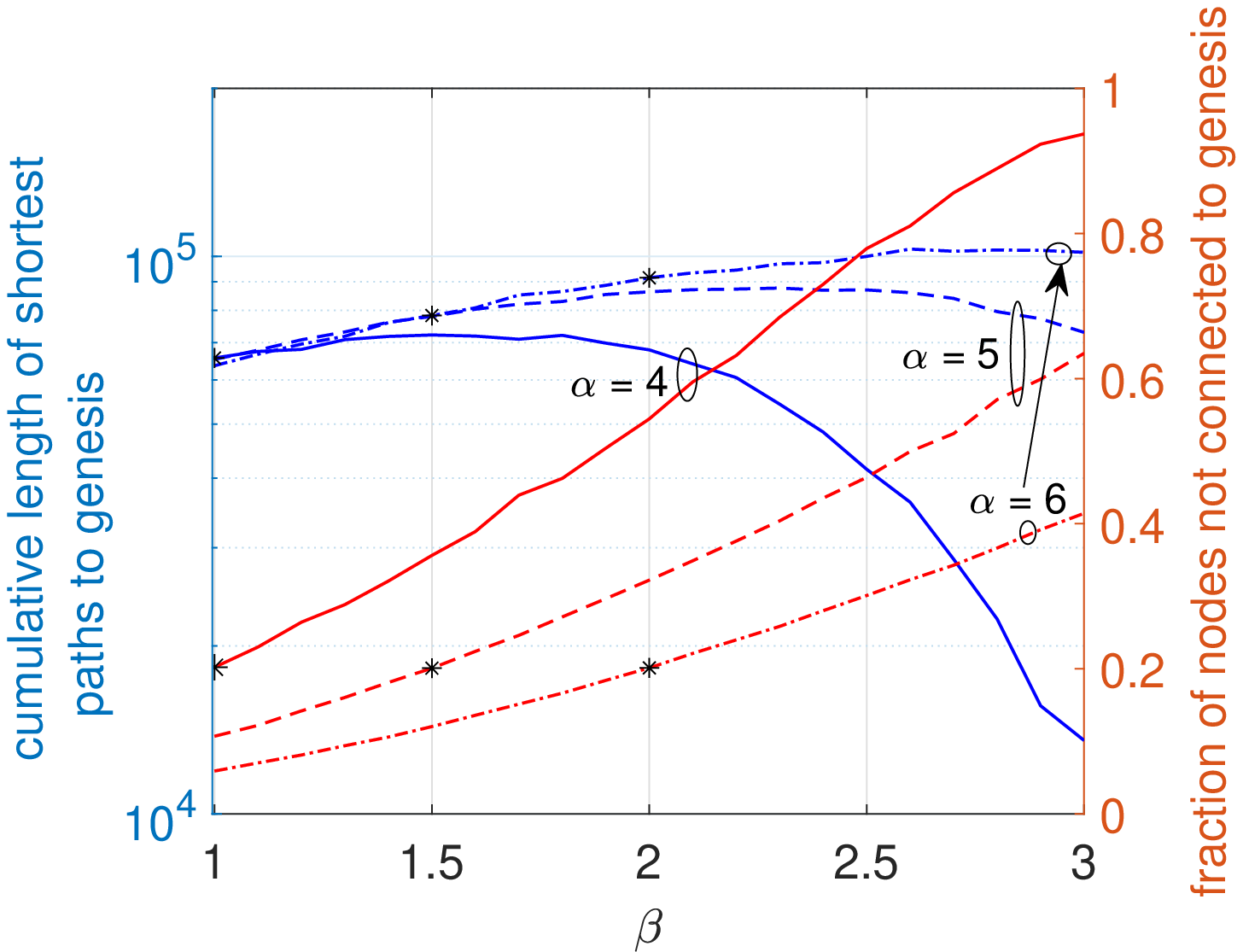}
		\caption{Cumulative length of shortest \\paths to genesis in hops. }
		\label{fig:sum_shortstpaths}
	\end{subfigure}
	\begin{subfigure}[b]{0.48\textwidth}
		\centering
		\includegraphics[width=\textwidth]{./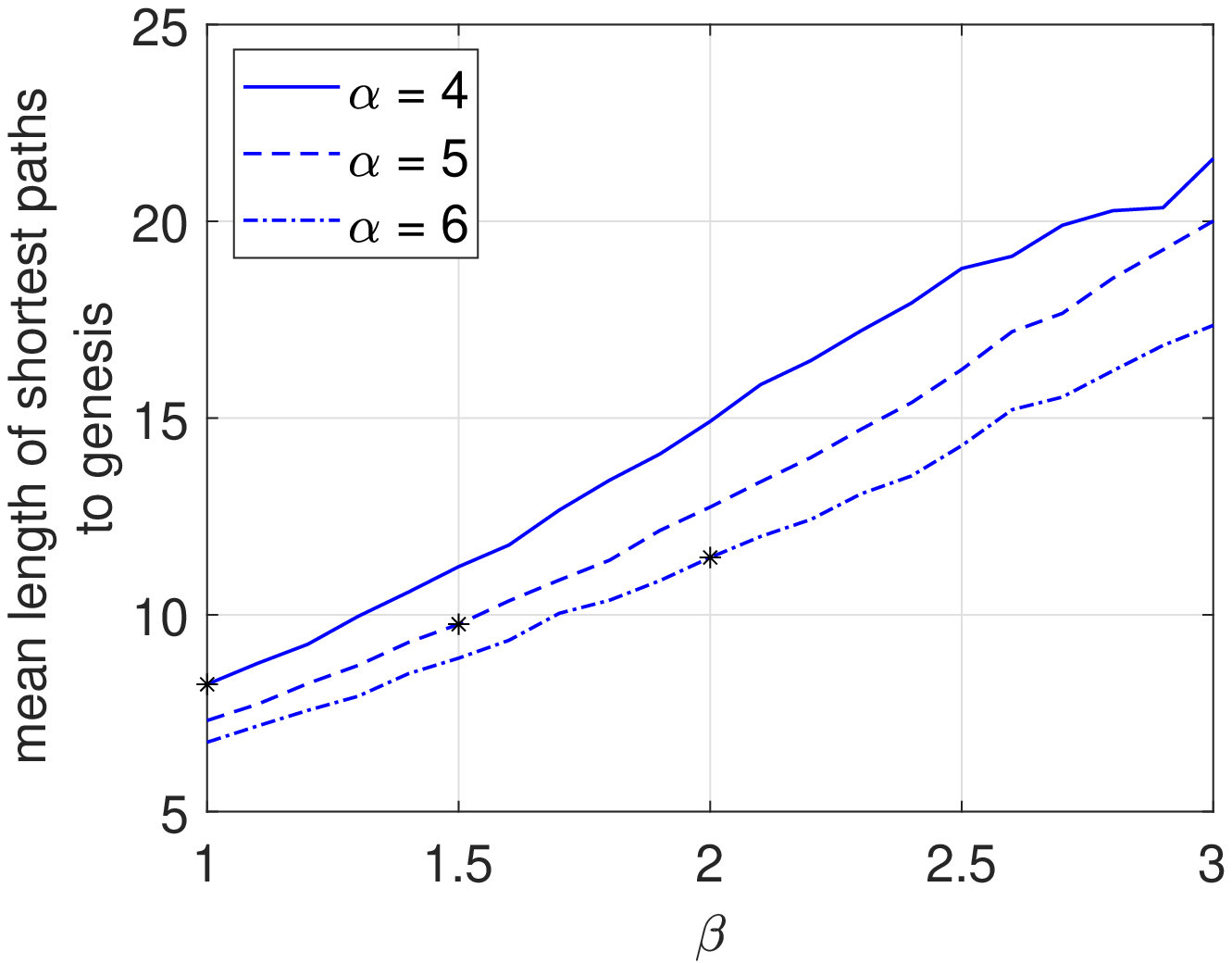}
		\caption{Mean length of shortest paths \\ to genesis in hops.}
		\label{fig:mean_shortstpaths}
	\end{subfigure}
	\captionsetup{width=1.0\linewidth}
	\caption{Properties of the shortest paths to genesis for $\alpha \in \{4,5,6\}, \beta \in \{1,\dots,3\}$. The width and depth of the constructed graphs show a strong dependency on $\beta$. The figures are obtained from $10^2$ simulation runs each including $n=10^4$ transactions. The black markers correspond to a mean outdegree $\mathsf{E}\left[Z^{\mathsf{out}}\right]=2$. }
	\label{fig:properties_shortest_paths}
\end{figure}

Figure~\ref{fig:properties_shortest_paths} shows the properties of the shortest paths to the genesis node for various values of the reinforcement bias $\beta$ and the edge density parameter $\alpha$.
In light of the impact of the tip selection algorithm parameterization through $\alpha$ and $\beta$ the figure reflects the \emph{shape} of the transaction DAG, i.e.,  its depth and width in terms of the length of the shortest paths to genesis.
In comparison, note that the extreme cases of a DAG of depth $1$ or a totally ordered chain produce sum lengths of the shortest paths of size $n$ and $\frac{n^2}{2}$, respectively.
The figure also shows markers for the values of $(\alpha,\beta)$ that correspond to a mean outdegree $\mathsf{E}\left[Z^{\mathsf{out}}\right]=2$ which is used in the context of DAG-type DLT \cite{popov2016tangle,Coordicide2020} as a \emph{deterministic} guideline for the number of validated nodes per new node.
While the fraction of nodes not connected to genesis remains constant as given by the detached surface approximation in \eqref{eq:detached_surface_probability_approx} the shape of the DAG DLT can be controlled by the choice of $\alpha,\beta$.
Note that a similar argument can be made in terms of the shortest paths to some arbitrary Genesis cluster of nodes.

\section{Proofs}\label{sec:proofs}
The proofs of our main results are divided into three sections. The first section contains the derivation of results that can be obtained in the general framework of \cite{CaoOlver20}. The second section provides concentration bounds for degree properties which are needed in the last section or are interesting in their own right and the final section contains the proof of our main result, Theorem~\ref{thm:giant}.

\subsection{Directed inhomogeneous random graphs and branching process approximation}
We call
\[
w_{n-1}(m)=\alpha\left(\frac{m}{n-1}\right)^\beta, \quad 1\leq m+1 \leq n,
\]
the \emph{weight} of vertex $m$ upon the arrival of vertex $n$. An equivalent way of characterising our model is to fix $N\in\mathbb{N}$, and rewrite the connection probabilities using the \emph{kernel} $\kappa:[0,1]^2\to[0,\infty)$ given by
\begin{equation}\label{def:kappadef}
	\kappa(x,y)=\alpha x^\beta y^{-(\beta+1)} \mathsf{1}\{x<y\}.
\end{equation}
Then \[
\PP(n\to m \text{ in } G_N)=\left(\frac{\kappa(m/N,n/N)}{N}\left(1+\frac{1}{n-1}\right)^{\beta+1} \right)\wedge 1, \quad 1\leq m < n < N.
\]
Setting also $$\phi\left(\frac mN,\frac nN\right)=\left(1+\frac{1}{n-1}\right)^{\beta+1}-1\in O(1/n),$$
the pair $(\kappa,\phi)$ parametrizes an instance of a \emph{directed inhomogeneous random graph} in the sense of Cao and Olvera-Cravioto \cite{CaoOlver20}. We can thus readily apply several results of \cite{CaoOlver20}, by which we immediately establish  sparsity of $G_N$ and the form of the limiting degree distribution in the next subsection. Note that, however, $\kappa$ is clearly a reducible kernel, since it has no mass on the lower diagonal. Therefore the connectivity results for irreducible kernels established in \cite{CaoOlver20} do not apply.

\medskip

It is straightforward to verify the regularity assumptions listed in \cite[Assumption 3.1]{CaoOlver20} for our model, using $\kappa$ and $\phi$ as defined above and interpreting $[0,1]$ together with Lebesgue measure as the ground space. Propositions~\ref{prop:sparsity} is now a special case of the corresponding statement in \cite{CaoOlver20}.

\begin{proof}[{Proof of Proposition \ref{prop:sparsity}}]
	By \cite[Proposition 3.3]{CaoOlver20}, we have
	\[
	\frac{|E_N|}{N}\to \int_0^1\int_0^1\kappa(x,y)\,\textup{d}x\textup{d}x=\frac{\alpha}{\beta+1}\quad \text{in }L^1.
	\]
	Each \[X_n=|E_{n+1}|-|E_n|=\sum_{m=1}^{n}\mathsf{1}\{n+1\rightarrow m\},\quad n\in\mathbb{N},\]
	is a sum of independent Bernoulli random variables with variance \[\sigma_n^2\leq \sum_{m=1}^{n} \frac{w_{n}(m)}{n}\left[\left(1-\frac{w_{n}(m)}{n}\right)\vee 0\right],\quad n\in \mathbb{N}.\]
	In particular, since $w_{n}(m)\leq \alpha$ for all $m,n$ we have
	\[
	\sum_{n=1}^\infty \frac{\sigma_n^2}{n^2}\leq \sum_{n=1}^\infty \frac{\alpha}{n^2}<\infty
	\]
	and Kolmogorov's Strong Law of Large Numbers (see e.g.\ \cite[Theorem 8.3]{FelerVol2}) yields almost sure convergence.
\end{proof}

\begin{proof}[{Proof of limiting distribution in Theorem \ref{thm:typicaldegree}}]
	The limiting distribution described in Theorem \ref{thm:typicaldegree} is obtained as a special case of the statement of \cite[Theorem 3.4]{CaoOlver20} when applied to the kernel $\kappa$ given in \eqref{def:kappadef}.
\end{proof}

\begin{proof}[Proof of {{Corollary~\ref{cor:detsurf}}}]
	The \emph{detached surface} $$\mathcal{D}(n)=\{1< k\leq n: D^{\mathsf{out}}(k)=0\}, \quad n>2,$$ comprises all vertices that have arrived and do not attach to any previous vertices. We let $\Delta_n=|\mathcal{D}(n)|$ denote the size of the detached surface at time $n$. $L^1$-convergence of $\Delta_n/n$ to $\textup{e}^{-\frac{\alpha}{\beta+1}}$ is a direct consequence of Theorem~\ref{thm:typicaldegree}, since $\EE[\Delta_n/n]+1/n$ is precisely the probability that a uniformly chosen vertex has outdegree $0$. Alternatively, we can calculate the asymptotic density of $\Delta_n$ directly. The probability that the $k$-th arriving vertex does not attach to any previous vertex is
	\begin{equation}\label{eq:detached_surface_probability}
		p'_k = \prod_{i=1}^{k-1} (1 - p_k(i)),\quad k>1.
	\end{equation}
	Using that $p_k(i)\leq \frac{\alpha}{k-1}$, we obtain that $\log(1-p_k(i))=-p_k(i)+O(1/k^2)$ and thus
	\begin{equation}\label{eq:detached_surface_probability_approx}
		\lim_{k\to\infty}p'_k= \lim_{k\to\infty}\textup{e}^{-\sum_{i=1}^{k-1}p_k(i)}=\textup{e}^{-\frac{\alpha}{\beta+1}}.
	\end{equation}
	The outdegrees of different vertices are independent, and therefore $\Delta_n$ is a sum of independent $\operatorname{Bernoulli}(p'_k)$-distributed random variables. The corresponding variances $p'_k(1-p'_k)$ are uniformly bounded by $1$, thus another invocation of Kolmogorov's Strong Law of Large Numbers yields almost sure convergence of $\Delta_n/n$ to $\textup{e}^{-\alpha/(\beta+1)}.$
\end{proof}

\begin{proof}[{Proof of Proposition~\ref{prop:nogiant}}]
	In \cite[Section 4.3.1]{CaoOlver20}, it is shown that the forward exploration stated in uniform vertex can be coupled to a multitype branching process. In our case, it is straightforward to deduce from the form of $\kappa$, that this process is the Galton-Watson process described in Remark~\ref{rmk:locallimit}. Any self-avoiding path of $r$ vertices originating in the root of the corresponding family tree is associated with a sequence of random types $V_1,V_2,\dots V_r\in[0,1]$ such that, given $V_1,\dots,V_{k-1}$, $V_{k}$ is distributed uniformly on $[V_{k-1},1]$. It now follows from the shape of the offspring distribution, that $V_r\to 1$ as $r\to\infty$ and hence the mean offspring number in the Galton-Watson process decays to $0$ as the number of generations $r$ increases. Consequently, the branching process never survives and this is sufficient to guarantee that $\PP(C_N(U)>\varepsilon N)\to 0$ as $N\to\infty$ for any $\varepsilon>0,$ see e.g.\cite[Corollary 2.27]{hofstad_book2}.
\end{proof}

\subsection{Concentration bounds}
We begin by stating a straightforward concentration bound on the maximal degree in $G_N$ which applies not only to our model but to any directed inhomogeneous random graph in which with $\kappa$ and $\phi$ are bounded.
\begin{lemma}\label{lem:maxdegree}
	Let $G_N$ denote a random graph in which each arc $(m,n)$ is included independently with probability $p_{mn}^{(N)}.$ Suppose that $p_{mn}^{(N)}\leq A/N$ uniformly in $m,n$ for all sufficiently large $N$ and some $A<\infty$, then the maximal total degree $\Delta_N$ in $G_N$ satisfies
	\begin{equation}\label{eq:maxdeg}
		\PP(\Delta_N> 2A+\lambda)\leq N \textup{e}^{-\frac{\lambda^2}{4A+2\lambda/3}}
	\end{equation}
	for any $N$. In particular, it follows that almost surely
	\[
	\Delta_N\in O(\log N).
	\]
\end{lemma}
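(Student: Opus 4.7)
The plan is to control the total degree of each vertex by Bernstein's inequality and then take a union bound over the $N$ vertices. First I would fix an arbitrary vertex $v \in [N]$ and write the total degree as
\[
D_N(v) = \sum_{u \in [N] \setminus \{v\}} \bigl( \mathsf{1}\{u \to v\} + \mathsf{1}\{v \to u\} \bigr),
\]
a sum of at most $2(N-1)$ independent Bernoulli random variables by the edge-independence assumption, each with success probability at most $A/N$. Consequently both the mean and the variance of $D_N(v)$ are bounded by $2(N-1) \cdot A/N \leq 2A$.

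The key step is Bernstein's inequality applied to $D_N(v) - \EE[D_N(v)]$. Since each summand is bounded by $1$ and the variance proxy is $\sigma^2 \leq 2A$, Bernstein yields
\[
\PP\bigl( D_N(v) - \EE[D_N(v)] \geq \lambda \bigr)\leq \exp\!\left(-\frac{\lambda^2/2}{2A + \lambda/3}\right) = \exp\!\left(-\frac{\lambda^2}{4A + 2\lambda/3}\right).
\]
Combining this with $\EE[D_N(v)] \leq 2A$ gives $\PP(D_N(v) > 2A + \lambda) \leq \exp(-\lambda^2/(4A + 2\lambda/3))$, and the union bound over $v \in [N]$ immediately yields \eqref{eq:maxdeg}.

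For the $O(\log N)$ statement, I would choose $\lambda_N = C \log N$ with $C$ sufficiently large. For such $\lambda_N$, the denominator $4A + 2\lambda_N/3$ is dominated asymptotically by $2\lambda_N/3$, so the right-hand side of \eqref{eq:maxdeg} decays like $N \cdot N^{-3C/2}$, which is summable in $N$ once $C$ exceeds $4/3$. A Borel--Cantelli argument then gives $\Delta_N \leq 2A + C \log N$ eventually almost surely, hence $\Delta_N \in O(\log N)$ a.s. There is no genuine obstacle here; the only mild subtlety is matching the constants so that the Bernstein bound comes out in the exact form stated in the lemma, but this is a direct computation once the standard version of Bernstein's inequality is in hand.
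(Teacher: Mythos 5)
Your proof is correct and follows essentially the same route as the paper: bound the total degree of a fixed vertex by a Bernstein/Chernoff-type tail inequality for sums of independent Bernoullis (the paper phrases this as stochastic domination by a $\operatorname{Bin}(2N, A/N)$ variable and invokes the corresponding inequality from Chung--Lu), then take a union bound over the $N$ vertices and conclude the almost-sure $O(\log N)$ bound via Borel--Cantelli with $\lambda = C\log N$.
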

\begin{proof}
	Let $D_n$ denote the total degree of $n\in V(G_N)$. By assumption, $D_n$ is dominated by a $\operatorname{Bin}(2N,A/N)$ random variable, hence
	\[
	\PP(D_n\geq 2A+\lambda)\leq \textup{e}^{-\frac{\lambda^2}{4A+\frac{2\lambda}{3}}},\quad n=1,\dots,N,
	\]
	using a classical concentration inequality such as \cite[Theorem 3.2]{ChungLu06}. Summing over $n$ yields \eqref{eq:maxdeg} and we note that choosing $\lambda=\lambda(N)=C \log N$ implies that
	\[N \textup{e}^{-\frac{\lambda^2}{4A+2\lambda/3}}<\infty, \]
	if $C$ is sufficiently large, which allows us to conclude the almost sure bound on $\Delta_N$.
\end{proof}

\begin{proof}[Proof of Propsition~\ref{thm:degev}]
	The statement about the maximal degree is an immediate consequence of Lemma~\ref{lem:maxdegree}. Let us now establish the stated bounds on the degree evolutions. The evolution of the indegree $D_n^{\mathsf{in}}(m)$ of a fixed vertex $m$ after the arrival of vertex $k>m$ can be modelled using a counting process that jumps by $1$ upon the arrival of the $j$th vertex after $k$ if this vertex attaches to $m$. Hence, jumps occur with probability
	\begin{equation}\label{eq:attachement_prob_index}
		p_{k+j}(m)=\frac{w_{k+j}(m)}{k+j-1}\wedge 1, \quad j\in\mathbb{N}_0,
	\end{equation}
	for $m$ and $k>m$ fixed. The indegree of vertex $m$ upon the arrival of vertex $n$ is hence given by a  generalized binomial distribution and satisfies
	\begin{align}\label{eq:expected_indegree}
		\mathsf{E}\left[D_n^{\mathsf{in}}(m)\right] & = \sum_{j=m+1}^{n} p_{j}(m) 
	\end{align}
	In particular, we have
	\[
	\mathsf{E}\left[D_n^{\mathsf{in}}(m)\right]\leq \frac{\alpha}{\beta}\left[1-\left(\frac{m}{n-1}\right)^\beta \right], \quad 1\leq m<n,
	\]
	and
	\[
	\mathsf{E}\left[D_n^{\mathsf{in}}(m)\right]\geq \frac{\alpha}{\beta}\left[\left(1-\frac{1}{m+1}\right)^\beta -\left(\frac{m+1}{n}\right)^\beta \right], \quad \lceil\alpha\rceil\leq m<n.
	\]
	The moment generating function of $D_n^{\mathsf{in}}(m)$ is given as
	\begin{equation}\label{eq:generatingfunc}\mathsf{E}\left[\textup{e}^{\theta D_n^{\mathsf{in}}(m)}\right] = \prod_{j=m+1}^{n} (1 + p_j(m)(\textup{e}^{\theta} - 1)), \quad \theta>0.\end{equation}
	A bound on the tail of the indegree distribution can now be derived using Markov's inequality
	\[
	\PP[D_n^{\mathsf{in}}(m) > x] \leq \inf_{\theta>0} \textup{e}^{-\theta x } \prod_{j=m+1}^{n} (1 + p_j(m)(\textup{e}^{\theta} - 1)).
	\]
	We obtain
	\begin{align*} \textup{e}^{-\theta x} \prod_{j=m+1}^{n} \big(1 + p_j(m)(\textup{e}^{\theta} - 1)\big) & \leq \exp\left(-\theta x + (\textup{e}^{\theta}-1)\sum_{j=m+1}^{n}p_j(m)\right)\\ &
		\leq \exp\left(-\theta x + (\textup{e}^{\theta}-1)\frac{\alpha}{\beta}\right),
	\end{align*}
	and the right hand side is minimized by $\theta=\log\frac{x \beta}{\alpha}$. Thus, for any $n>m\geq 1$ we have the uniform upper tail bound
	\[
	\PP[D_n^{\mathsf{in}}(m) > x]\leq \exp\left[-\frac{\alpha}{\beta}-x\left(\log x +\log \frac{\beta}{\alpha}-\frac{\beta}{\alpha} \right)\right], \quad x>\alpha/\beta,
	\]
	which establishes part (i). Turning to part (ii), we recall that the probability that $k$ attaches itself to $i\in[k-1]$ is given by \eqref{eq:attachement_prob_index}. The expected outdegree is hence given by
	\[
	\mathsf{E}\left[D^{\mathsf{out}}(k)\right] = \sum_{i=1}^{k-1} p_k(i) = \frac{\alpha}{\beta+1}+ O(1/k),
	\]
	where the error term can is simply due to approximating the sum by an integral.
\end{proof}
\begin{proof}[Proof of the error bound in Theorem~\ref{thm:typicaldegree}]
	We only provided the detailed calculation for the in-degree distribution, a similar but simpler argument yields the same bound for the out-degree distribution. Let us set, for each $n\in\mathbb{N}$,
	\[
	P_n(k)=\frac{1}{n}\sum_{m=1}^n\mathsf{1}\{D^{\mathsf{in}}_n(m)=k \},\quad k=0,1,2,\dots,
	\]
	and denote the target distribution by $(p(k))_{k=0}^\infty$. We have
	\[
	p(k)= \int_0^1\textup{e}^{-\frac{\alpha}{\beta}(1-s^\beta)}\left(\frac{\alpha}{\beta}(1-s^\beta)\right)^k\frac{1}{k!}\,\textup{d}s,\quad k=0,1,2,\dots,
	\]
	and, applying the triangle inequality,
	\begin{equation}\label{eq:errorsplit}
		\sum_{k=0}^\infty |P_n(k)-p(k)|\leq \sum_{k=0}^\infty |P_n(k)-\EE P_n(k)|+\sum_{k=0}^\infty |\EE P_n(k)-p(k)|, \quad n\in\mathbb{N}.
	\end{equation}
	Note that the first error is probabilistic and that the second error is essentially just a discretisation error, since
	\begin{equation}\label{eq:expPk}
		\EE P_n(k)=\frac{1}{n}\sum_{m=1}^n \textup{e}^{-\sum_{j=m+1}^n (\alpha \frac{m^\beta}{(j-1)^{\beta+1}} )\wedge 1}\frac{\left(\sum_{j=m+1}^n (\alpha \frac{m^\beta}{(j-1)^{\beta+1}} )\wedge 1\right)^k}{k!},\; k\leq n,
	\end{equation}
	which can be obtained e.g.\ by differentiating \eqref{eq:generatingfunc}. We calculate the bound for each error in the following two lemmas.
	\begin{lemma}\label{lem:deterministic}
		We have that
		\begin{equation}\label{eq:errorbddeter}
			\sum_{k=0}^\infty |\EE P_n(k)-p(k)|=O\left(\frac{\log n}{n}\right)
		\end{equation}
	\end{lemma}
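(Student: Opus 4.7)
The plan is to insert the intermediate expression
\[
\frac{1}{n}\sum_{m=1}^n e^{-\lambda(m/n)}\,\frac{\lambda(m/n)^k}{k!}, \qquad \lambda(s) := \frac{\alpha}{\beta}(1-s^\beta),
\]
and decompose the total-variation sum into two separately controllable errors: a \emph{parameter-approximation} error from replacing $\lambda_{n,m} := \sum_{j=m+1}^n (\alpha m^\beta/(j-1)^{\beta+1}) \wedge 1$ by $\lambda(m/n)$, and a \emph{Riemann-sum discretisation} error from replacing $(1/n)\sum_m(\cdot)$ by $\int_0^1 (\cdot)\,\textup{d}s$. The workhorse in both steps is the elementary Poisson $L^1$-bound
\[
\sum_{k=0}^\infty \left| e^{-\mu_1}\mu_1^k/k! - e^{-\mu_2}\mu_2^k/k! \right| \leq 2|\mu_1-\mu_2|,\qquad \mu_1,\mu_2\geq 0,
\]
which trades the summation over $k$ for a pointwise bound on parameter differences.

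For the parameter-approximation error, the truncation by $1$ in $\lambda_{n,m}$ is only active for the $O(1)$ indices $m < \alpha$, and these contribute $O(1/n)$ to the averaged difference. For $m \geq \lceil\alpha\rceil$, the difference $\lambda_{n,m} - \lambda(m/n)$ is exactly the Riemann-sum error for the positive decreasing integrand $t \mapsto \alpha m^\beta/t^{\beta+1}$ on $[m,n]$. A telescoping comparison $\bigl|\sum_{i=m}^{n-1}f(i)-\int_m^n f(t)\,\textup{d}t\bigr|\leq f(m)-f(n)$ for decreasing $f$ yields $|\lambda_{n,m} - \lambda(m/n)| \leq \alpha m^\beta/m^{\beta+1} = \alpha/m$, and averaging over $m$ via the Poisson $L^1$-bound produces a total contribution of $(2/n)\sum_{m=1}^n \alpha/m = O(\log n/n)$.

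For the Riemann-sum error I would use Fubini to rewrite the total as $\sum_m \int_{(m-1)/n}^{m/n} \sum_k |g_k(m/n) - g_k(s)|\,\textup{d}s$ with $g_k(s) = e^{-\lambda(s)}\lambda(s)^k/k!$, and apply the Poisson $L^1$-bound pointwise in $s$ to reduce to $2\sum_m \int_{(m-1)/n}^{m/n} |\lambda(m/n) - \lambda(s)|\,\textup{d}s$. For $\beta \geq 1$, $\lambda$ is $\alpha$-Lipschitz on $[0,1]$ and this is $O(1/n)$ at once. For $\beta < 1$, $\lambda'(s) = -\alpha s^{\beta-1}$ blows up at the origin, so I would treat the first interval $[0,1/n]$ by hand, using the explicit bound $|\lambda(s) - \lambda(1/n)| \leq (\alpha/\beta) n^{-\beta}$ to absorb it into $O(n^{-1-\beta})$; on each subsequent interval I would apply the mean value theorem with $|\lambda'(\xi)| \leq \alpha((m-1)/n)^{\beta-1}$ and reduce the remainder to $O\bigl(n^{-1-\beta}\sum_{m=2}^n (m-1)^{\beta-1}\bigr)$, which a standard integral comparison $\sum_{m=2}^n (m-1)^{\beta-1} = O(n^\beta)$ evaluates as $O(1/n)$.

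The main obstacle is the Hölder-but-not-Lipschitz nature of $\lambda$ near the origin in the $\beta<1$ regime; the interval-by-interval estimate just described is the only step that does not go through mechanically. Combining the two errors yields the announced $O(\log n/n)$ bound.
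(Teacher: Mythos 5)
Your argument is correct, and it takes a genuinely different route from the paper's. The paper controls the $k$-sum by establishing, for each $k$, that $x\mapsto \textup{e}^{-x}x^k$ is Lipschitz with constant $L_k=\ell_1 k\,\ell_2^{k-1}$ and then summing $\sum_k L_k/k!<\infty$; the Riemann-sum discretisation is then handled by a generic one-dimensional bound $|\frac{1}{n}\sum f(i/n)-\int f|\leq \frac{1}{2n}\sup|f'|$. You instead collapse the $k$-sum at the outset using the Poisson $L^1$-bound $\sum_k|\textup{e}^{-\mu_1}\mu_1^k/k!-\textup{e}^{-\mu_2}\mu_2^k/k!|\leq 2|\mu_1-\mu_2|$, reducing everything to pointwise estimates on Poisson parameters, which you control by a telescoping comparison (giving $|\lambda_{n,m}-\lambda(m/n)|\leq\alpha/m$) and a mean-value-theorem argument.

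What your route buys, beyond being conceptually cleaner, is that it actually closes a small gap in the paper's argument. The paper's Riemann-sum step invokes $\sup_{s\in[0,1]}|f'(s)|$ for $f(s)=\textup{e}^{-\lambda(s)}\lambda(s)^k$, but $\lambda'(s)=-\alpha s^{\beta-1}$ is unbounded near $s=0$ whenever $\beta<1$, so that supremum is infinite and the stated bound $\frac{b-a}{2n}\sup|f'|$ does not directly apply. You flag exactly this issue and resolve it by peeling off the first interval $[0,1/n]$, where the H\"older estimate $|\lambda(s)-\lambda(1/n)|\leq\frac{\alpha}{\beta}n^{-\beta}$ suffices, and then using $|\lambda'(\xi)|\leq\alpha((m-1)/n)^{\beta-1}$ on the remaining intervals, obtaining a total $O(n^{-1-\beta}\sum_{j=1}^{n-1}j^{\beta-1})=O(1/n)$. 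All three pieces of your estimate (truncation, parameter approximation, Riemann sum) assemble correctly to the claimed $O(\log n/n)$.
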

	\begin{proof}
		First note, that $(\alpha \frac{m^\beta}{(j-1)^{\beta+1}} )\wedge 1=1$ only if $m\leq j-1 \leq n_0$ and thus the error induced in the calculation of the probability weights by ignoring the truncation is $O(n^{-1})$ and may thus be ignored. A straightforward integral approximation yields
		\[
		\alpha m^\beta\sum_{j=m+1}^n (j-1)^{-\beta-1}=\frac{\alpha}{\beta}\left(1-\left(\frac{m}{n}\right)^\beta\right)+O(m^{-1}),
		\]
		for all sufficiently large $n$. Since $\frac{\textup{d}}{\textup{d}x}\textup{e}^{-x}x^{k}=(k-x)\textup{e}^{-x}x^{k-1}$, we obtain that, for any $C<\infty$, the function $\textup{e}^{-x}x^{k}$ is uniformly Lipschitz continuous in $x$ on $[-C/m,\alpha/\beta+C/m]$ with $m$-independent Lipschitz constant
		\[
		L_k= \ell_1 k \ell_2^{k-1}, \quad k\in\mathbb{N},
		\]
		where $\ell_1,\ell_2$ are independent of $k$. Setting further $L_0=K$ for some sufficiently large $K$, we conclude that there is a $k$-independent constant $C<\infty$, such that
		\begin{align*}
			\Bigg|\frac{1}{n}\sum_{m=1}^n & \textup{e}^{-\sum_{j=m+1}^n \alpha \frac{m^\beta}{(j-1)^{\beta+1}} }\left(\sum_{j=m+1}^n \alpha \frac{m^\beta}{(j-1)^{\beta+1}} \right)^k \\
			& - \frac{1}{n}\sum_{m=1}^n \textup{e}^{-\frac{\alpha}{\beta}\left(1-\left(\frac{m}{n}\right)^\beta\right)}\left(\frac{\alpha}{\beta}\left(1-\left(\frac{m}{n}\right)^\beta\right)\right)^k\Bigg|\leq \frac{C L_k}{n}\sum_{m=1}^n\frac{1}{m}.
		\end{align*}
		To approximate the discrete outer sum by an integral we use that if $f:[a,b]\to\mathbb{R}$ is continuously differentiable, then
		\[
		\left|\frac{1}{n}\sum_{i=1}^{n}f\left(\frac{i(b-a)}{n}\right)-\int_a^b f(s) \,\textup{d}s \right|\leq \frac{b-a}{2 n} \sup_{x\in[a,b]}f'(x),
		\]
		hence the constants $L_k, k\geq 0$, can be used to bound this approximation error as well and we end up with
		\begin{align*}
			\Bigg|\frac{1}{n}\sum_{m=1}^n & \textup{e}^{-\sum_{j=m+1}^n \alpha \frac{m^\beta}{(j-1)^{\beta+1}} }\left(\sum_{j=m+1}^n \alpha \frac{m^\beta}{(j-1)^{\beta+1}} \right)^k \\
			& - \int_{0}^1 \textup{e}^{-\frac{\alpha}{\beta}\left(1-s^\beta\right)}\left(\frac{\alpha}{\beta}\left(1-s^\beta\right)\right)^k \,\textup{d}s\Bigg|\leq \frac{C_1 L_k}{n}\sum_{m=1}^n\frac{1}{m} + \frac{C_2}{n},
		\end{align*}
		where $C_1,C_2$ are finite constants and independent of $n,m$ and $k$. Noting further that $\sum_{k=0}^\infty\frac{L_k}{k!}<\infty$, we arrive at \eqref{eq:errorbddeter}, since
		\[
		\sum_{k=1}^n|\EE P_n(k)-p(k)| + \sum_{k=n}^\infty p_k \leq \frac{(C_1+C_2)}{n}\sum_{m=1}^n\frac{1}{m} \sum_{k=1}^n \frac{L_k}{k!} + \sum_{k=n}^\infty p_k =O(\log n/n),
		\]
		since $\sum_{k=n}^\infty p_k$ decays exponentially in $n$.
		%
		%
		
		
	\end{proof}
	\begin{lemma}\label{lem:concentrationbound}
		We have that almost surely
		\begin{equation*}
			\sum_{k=0}^\infty |P_n(k)-\EE P_n(k)|=O\left({\frac{(\log n)^{3/2}}{\sqrt n}}\right).
		\end{equation*}
	\end{lemma}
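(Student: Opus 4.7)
The plan is to exploit the fact that the edge indicators of the YAPA model are mutually independent, and that for any two distinct vertices $m_1, m_2 \in [n]$ the indegrees $D_n^{\mathsf{in}}(m_1)$ and $D_n^{\mathsf{in}}(m_2)$ are determined by disjoint collections of arcs (namely those pointing into $m_1$ versus those pointing into $m_2$). Consequently, for each fixed $k$ the count $n P_n(k)=\sum_{m=1}^n \mathsf{1}\{D_n^{\mathsf{in}}(m)=k\}$ is a sum of $n$ independent (though not identically distributed) Bernoulli random variables, which reduces the problem to a concentration-plus-union-bound argument.

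The first step is to truncate the infinite sum at height $K \log n$ for a suitably large deterministic constant $K$. By Lemma~\ref{lem:maxdegree}, combined with a Borel--Cantelli application of the tail estimate established in its proof, such a $K$ can be chosen so that almost surely $\Delta_n \leq K\log n$ for all sufficiently large $n$, and hence $P_n(k)=0$ for every $k>K\log n$. The corresponding deterministic tail $\sum_{k>K\log n}\EE P_n(k)$ is then controlled using Proposition~\ref{thm:degev}(i): the bound $\PP(D_n^{\mathsf{in}}(m)\geq k)\leq \exp[-\Theta(k\log k)]$ implies that this remainder is $n^{-\Theta(\log\log n)}$, which is negligible on the target scale $(\log n)^{3/2}/\sqrt n$ as soon as $K$ is taken sufficiently large.

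For the main range $0\leq k\leq K\log n$, I would apply Hoeffding's inequality to the sum $nP_n(k)$ of independent $[0,1]$-valued variables to obtain
$$\PP\bigl(|P_n(k)-\EE P_n(k)|>t\bigr)\leq 2\exp(-2 n t^2),\quad t>0.$$
Setting $t=c\sqrt{(\log n)/n}$ with $c$ sufficiently large and taking a union bound over the $\lceil K\log n\rceil + 1$ admissible values of $k$ yields a failure probability of order $(\log n)\, n^{-2c^2}$, which is summable in $n$ once $c^2>1$. A Borel--Cantelli argument then guarantees that almost surely the bound $|P_n(k)-\EE P_n(k)|\leq c\sqrt{(\log n)/n}$ holds uniformly in $k\leq K\log n$ for all large $n$. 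Summing over $k$ and combining with the truncated tail gives the claimed rate
$$\sum_{k=0}^\infty |P_n(k)-\EE P_n(k)|\leq (K\log n + 1)\cdot c\sqrt{(\log n)/n} + n^{-\Theta(\log\log n)} = O\bigl((\log n)^{3/2}/\sqrt n\bigr).$$

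The central insight is the edge-independence observation; after that, the argument is essentially routine. The only mildly delicate point is the joint choice of $K$ and $c$ so that both the deterministic tail bound from Proposition~\ref{thm:degev}(i) and the union-bound probability decay fast enough on the desired scale, but this requires no more than choosing both constants large enough, and I do not expect any serious obstacle beyond keeping track of these constants.
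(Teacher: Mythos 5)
Your proposal is correct and follows the same route as the paper's own proof: observe that the indicators $\mathsf{1}\{D_n^{\mathsf{in}}(m)=k\}$, $m=1,\dots,n$, are independent since the in-degrees depend on disjoint arc collections, truncate the $k$-sum at $O(\log n)$ via the maximal-degree bound of Lemma~\ref{lem:maxdegree}, control the truncated tail of $\EE P_n(k)$, apply a concentration inequality to each $nP_n(k)$, union-bound over the $O(\log n)$ surviving values of $k$, and finish with Borel--Cantelli. The only immaterial differences are that you use Hoeffding where the paper uses the Chernoff/Bernstein form \eqref{eq:Chernoffbound} from \cite{ChungLu06}, and you bound the tail $\sum_{k>K\log n}\EE P_n(k)$ via Proposition~\ref{thm:degev}(i) rather than via the expectation-level estimate derived from Lemma~\ref{lem:maxdegree} — both give more than enough decay on the relevant scale.
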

	\begin{proof}
		Fix $k$ and write $X_i=\mathsf{1}\{D_n(i)=k\}, i=1,\dots,n$. Note that the $X_i,i=1,\dots,n$ are independent and thus another invocation of Chernoff's inequality (see e.g.\ \cite[Thm.3.2]{ChungLu06}) yields that	
		\begin{equation}\label{eq:Chernoffbound}
			\PP\left(\left|\sum_{i=1}^n X_i-\EE X_i\right|>n\lambda\right)\leq 2\exp\left(-\frac{(\lambda n)^2}{2(\sum_{i=1}^n \EE X_i + \frac{\lambda n}{3})} \right)=2\exp\left(-\frac{\lambda^2 n}{2(\EE P_n(k) + \frac{\lambda}{3})} \right),
		\end{equation}
		with $\lambda>0$. Let $E_n$ denote the event that there no vertex $i\in [n]$ with $D_n(i)\geq C\log n$, then for any $\lambda'>0$
		\begin{align*}
			\PP\Big(\sum_{k=0}^\infty |P_n(k)-\EE P_n(k)|\geq \lambda'\Big)& \leq \PP\Big(\Big\{ \sum_{k=0}^\infty |P_n(k)-\EE P_n(k)| \geq \lambda' \Big\}\cap E_n\Big) +\PP(E^{\mathsf{c}}_n)\\
			& \leq \PP\Big(\sum_{k=0}^{C \log n} |P_n(k)-\EE P_n(k)| + \sum_{k= C\log n}^{\infty}\EE P_n(k) \geq \lambda' \Big) +\PP(E^{\mathsf{c}}_n).
		\end{align*}
		Arguing as in the proof of Lemma~\ref{lem:maxdegree}, we may choose $C$ so large that $\sum_{n=1}^\infty\PP(E^{\mathsf{c}}_n)=B<\infty$ and such that $$\sum_{k= C\log n}^{\infty}\EE P_n(k)\leq \frac{1}{n}\EE \#\{i\in[n]:D_n(i)\geq C\log n\}\leq \frac{1}{n}$$
		for all sufficiently large $n$. Observe further, that \[
		\sum_{k=0}^{C \log n} |P_n(k)-\EE P_n(k)|\geq \lambda'-1/n
		\]
		is only possible, if at least one of the terms $|P_n(k)-\EE P_n(k)|$ exceeds $(\lambda'-1/n)/(C\log n)$ and we conclude that
		\[
		\PP\Big(\sum_{k=0}^\infty |P_n(k)-\EE P_n(k)|\geq \lambda'\Big)\leq \PP(E^{\mathsf{c}}_n) + \sum_{k=0}^{C\log n}\PP\Big(|P_n(k)-\EE P_n(k)|\geq \frac{\lambda'-1/n}{C\log n}\Big).
		\]
		Choosing now $\lambda=\lambda(N)= D \sqrt{\log n / n}$ for some sufficiently large $D$ and setting $\lambda'=\lambda'(n)=\lambda C \log n +1/n$ allows us to apply \eqref{eq:Chernoffbound} to obtain
		\[
		\sum_{n=1}^\infty \PP\Big(\sum_{k=0}^\infty |P_n(k)-\EE P_n(k)|\geq R\frac{(\log n)^{3/2}}{\sqrt{n}}\Big)\leq B + C \sum_{n=1}^\infty  \textup{e}^{-\lambda^2 n /(2+2\lambda/3)}\log n<\infty,
		\]
		for some $R<\infty$ which implies that almost surely
		\[
		\sum_{k=0}^\infty |P_n(k)-\EE P_n(k)|=O\left(\frac{\log n^{3/2}}{\sqrt{n}}\right).
		\]
	\end{proof}
	Combining the previous lemmas concludes the proof of Theorem~\ref{thm:typicaldegree}.
\end{proof}

\subsection{Forward component size and weight evolution -- Proof of Theorem~\ref{thm:giant}}
Let $A\subset [n]$, then
\[
w_{n}(A)=\sum_{m\in A } w_{n}(m), \quad n\in\mathbb{N},
\]
the \emph{weight} of $A$ upon arrival of vertex $n+1$ essentially determines how likely it is that $n+1$ connects to a vertex in $A$. Set
\[
\Gamma_n(m)=|C_{n}(m)|,\quad n_0< m \leq n,
\]
i.e.\ $\Gamma_n(m)$ counts how often $m$ has been referenced (directly and indirectly). We have $\Gamma_{m}(m)=0$ and
\[
\Gamma_{n+1}(m)=\begin{cases}
	\Gamma_{n}(m), & \text{ with probability } \prod_{l\in C_n(m)}\left(1-\frac{w_{n}(l)}{n}\right),\\
	\Gamma_{n}(m)+1, & \text{ with probability }1 - \prod_{l\in C_n(m)}\left(1-\frac{w_{n}(l)}{n}\right),
\end{cases} n>m.
\]
Hence transitions of the processes $\Gamma(m)=(\Gamma_n(m))_{n\geq m}$ depend on the weight structure of the forward component of $m$, not only on its present size. In particular, each $\Gamma(m)$ is a Markov process with respect to the filtration generated by the graph sequence $(G_n)_{n\geq m}$ but not with respect to the smaller filtration generated by the sequence $(\Gamma_n(m))_{n\geq m}$ itself. It is therefore more convenient to study the weight processes $W_n(m)=w_n(C_n(m)), n\geq m$, which have the dynamics
\[
W_{n+1}(m)=\begin{cases}
	\left(\frac{n}{n+1}\right)^\beta W_{n}(m), & \text{ with probability } \prod_{k\in C_n(m)}\left(1-\frac{w_n(k)}{n}\right),\\
	\left(\frac{n}{n+1}\right)^\beta W_{n}(m)+\alpha, & \text{ with probability }1 - \prod_{k\in C_n(m)}\left(1-\frac{w_n(k)}{n}\right).
\end{cases}
\]
To eliminate the slightly unwieldy products structure of the transition probability, we formulate a stochastic domination result. We use the notation
$$\Delta a_n= a_{n+1}-a_n, \quad n=n_1,n_1+1,n_1+2,\dots,$$
for the increments of a random or deterministic sequence $(a_n)_{n\geq n_1}$ and further write $\{W_{n+1}\, \uparrow \} $ for the event that $\alpha$ is added to the score process in step $n+1$. The complementary event is denoted by $\{W_{n+1}\, \downarrow \}$ and we extend this notation to all other processes $X=(X_n)$ appearing below for which $\Delta X_n$ can attain precisely two values.
\begin{lemma}\label{lem:dominationlowerbound}
	Let $Y_n(m)=Y_n(m; a, b , y)$ denote the Markov chain given by $Y_m(m)=y$ and
	\[
	\Delta Y_n(m)=\begin{cases} -\frac{b+1}{n+1} Y_n(m) + \frac{a}{n+1}, \; & \text{with probability }\left(1-\textup{e}^{-Y_n(m)}\right),\\
		-\frac{b+1}{n+1} Y_n(m), \; & \text{with probability }\textup{e}^{-Y_n(m)},
	\end{cases} \quad n\geq m.
	\]
	Then $(W_n(m)/n)_{n\geq m}$ stochastically dominates $\big(Y_n(m; \alpha, \beta , W_m(m)/m)\big)_{n\geq m}$.
\end{lemma}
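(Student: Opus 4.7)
My plan is to prove the stochastic domination by a pathwise coupling of the two Markov chains after a suitable normalisation. The natural object to study is $X_n := W_n(m)/n$; a short algebraic manipulation of the transitions for $W_n(m)$ gives
\[
X_{n+1} = \left(1 - \frac{1}{n+1}\right)^{\beta+1} X_n + \frac{\alpha}{n+1}\,\mathsf{1}\{W_{n+1}\uparrow\},
\]
with up-probability $1 - \prod_{k \in C_n(m)}(1 - w_n(k)/n)$. This matches the template of $Y_n$ in the lemma statement (a deterministic contraction of the current value plus a Bernoulli-triggered additive kick of size $\alpha/(n+1)$), so only the two discrepancies -- the shape of the contraction and the value of the jump probability -- need to be controlled.

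The control comes from two elementary inequalities. Since $\sum_{k \in C_n(m)} w_n(k)/n = X_n$, the standard estimate $\prod_k(1 - x_k) \leq \exp(-\sum_k x_k)$ gives
\[
1 - \prod_{k \in C_n(m)}\left(1 - \frac{w_n(k)}{n}\right) \geq 1 - e^{-X_n},
\]
so the $X$-chain jumps upward at least as often as the $Y$-chain does from the same current value. Bernoulli's inequality, valid because $\beta+1 \geq 1$, yields $(1 - 1/(n+1))^{\beta+1} \geq 1 - (\beta+1)/(n+1)$, so the multiplicative contraction acting on $X_n$ is no more severe than the additive decrement $-(\beta+1)Y_n/(n+1)$ appearing in the dynamics of $Y_n$.

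With these in hand I would set up a synchronous coupling driven by a single sequence of uniforms $(U_n)_{n\geq m}$: $Y$ jumps upward when $U_n < 1 - e^{-Y_n}$, and $X$ jumps upward when $U_n$ falls below the (under the induction hypothesis) larger threshold $1 - \prod_k(1 - w_n(k)/n)$. Starting from $X_m = Y_m = W_m(m)/m$ and assuming inductively $X_n \geq Y_n$, a three-way case split (both jump, only $X$ jumps, neither jumps) reduces in each case to the bound
\[
X_{n+1} - Y_{n+1} \geq \left(1 - \frac{\beta+1}{n+1}\right)(X_n - Y_n),
\]
optionally augmented by a non-negative additive term $\alpha/(n+1)$, which is non-negative as soon as $n \geq \beta$. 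The main technical wrinkle is therefore the initial regime $n < \beta$ in which the contraction coefficient could turn negative; I expect this to be handled either by treating the first $O(1)$ steps separately or by restricting attention to sufficiently large $m$, which is legitimate since the lemma is ultimately invoked only for the asymptotic analysis of the forward component in the proof of Theorem~\ref{thm:giant}.
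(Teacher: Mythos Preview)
Your proposal is correct and follows essentially the same route as the paper. Both arguments rewrite $W_n/n$ as a contraction plus a Bernoulli kick, use $\prod_k(1-x_k)\le \exp(-\sum_k x_k)$ to lower bound the jump probability by $1-e^{-W_n/n}$, and use $(n/(n+1))^{\beta+1}\ge 1-(\beta+1)/(n+1)$ to compare the deterministic parts; the paper obtains the latter via a Taylor expansion rather than Bernoulli's inequality and leaves the coupling implicit, whereas you spell it out with common uniforms and an induction---and you are in fact more careful than the paper in flagging the small-$n$ issue, which the paper's proof does not address.
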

\begin{proof}
	We start by analysing the transition probabilities of $W_n=W_n(m):$
	\begin{align*}
		1-\prod_{k\in C_n(m)}\left(1-\frac{w_n(k)}{n}\right) & = 1-\textup{e}^{\sum_{k\in C_n(m)}\log(1-w_n(k)/n)}\\
		& = 1-\exp\left(-\sum_{k\in C_n(m)}\sum_{j=1}^\infty \frac{\left(\frac{w_n(k)}{n}\right)^j}{j}\right).
	\end{align*}
	Noting that $w_n(k)/n$ is uniformly bounded by $\frac{\alpha}{n}$, we can estimate
	\begin{equation}\label{eq:upperlower}
		\begin{aligned}
			1-\textup{e}^{-\sum_{k\in C_n(m)} \frac{w_n(k)}{n}} & \leq  1-\exp\left(  -\sum_{k\in C_n(m)}\sum_{j=1}^\infty \frac{\left(\frac{w_n(k)}{n}\right)^j}{j}\right)\\	
			& \leq  1-\exp\left(-\sum_{k\in C_n(m)}\left(\frac{w_n(k)}{n}+(1+\varepsilon)\frac{w_n(k)^2}{2n^2}\right)\right),
		\end{aligned}
	\end{equation}
	for any $\varepsilon>0$, if $n$ is sufficiently large. Another Taylor expansion yields
	\begin{equation}\label{eq:Taylor2}
		1-\frac{\beta+1}{n+1}\leq \left(\frac{n}{n+1}\right)^{\beta+1}\leq 1-\frac{\beta+1}{n+1}+\frac{\beta(\beta+1)}{2(n+1)^2},\quad n\geq m.
	\end{equation}
	We conclude that
	\begin{align*}
		\Delta \frac{W_n}{n} & = \left(1-\left(\frac{n}{n+1}\right)^{\beta+1}\right)\frac{W_n}{n} + \frac{\alpha}{n+1}\mathsf{1}\{n+1 \text{ connects to }C_n(m) \}\\
		& \geq -\frac{\beta+1}{n+1}\frac{W_n}{n} + \frac{\alpha}{n+1}\mathsf{1}\{n+1 \text{ connects to }C_n(m) \},
	\end{align*}
	and due to the lower bound in \eqref{eq:upperlower}, the event in the indicator occurs with probability at least $$	1-\textup{e}^{-\sum_{k\in C_n(m)} \frac{w_n(k)}{n}}=1-\textup{e}^{-W_n/n},$$
	given $C_n(m)$, which proves the lemma.
\end{proof}
To obtain a stochastic upper bound is only slightly more tricky.
\begin{lemma}\label{lem:dominationupperbound}
	Let $Y_n(m; a,b,y)$ be defined as in Lemma \ref{lem:dominationlowerbound} and let $a>\alpha, b<\beta$. There exits $n_1$ such that for all $n\geq n_1$, we can couple
	$Y_n=Y_n(n_1; a,b, w)$ and $W_n=W_n(m)$ conditionally on $\frac{a}{\alpha n_1}W_{n_1}=w$ such that
	$$
	Y_n\geq \frac{a}{\alpha n} W_{n}, \quad \text{ for all }n\geq n_1.
	$$
\end{lemma}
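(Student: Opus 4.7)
The plan is to introduce the rescaled weight process $\tilde W_n := \frac{a}{\alpha n} W_n(m)$, $n\geq n_1$, and to construct a Markovian coupling of $Y_n$ and $\tilde W_n$ that inductively preserves the ordering $Y_n \geq \tilde W_n$. A direct computation using \eqref{eq:Taylor2} gives
\[
\tilde W_{n+1} = \left(\frac{n}{n+1}\right)^{\beta+1} \tilde W_n + \frac{a}{n+1}\,\mathsf{1}\{n+1 \to C_n(m)\},
\]
so that the \emph{upward} jump of $\tilde W$ carries exactly the increment $a/(n+1)$, matching that of $Y$. Moreover, because $b<\beta$, one has $1 - (b+1)/(n+1) \geq (n/(n+1))^{\beta+1}$ for all $n$ exceeding a threshold depending only on $b,\beta$ (the gap $(\beta-b)/(n+1)$ dominates the $O(n^{-2})$ Taylor remainder in \eqref{eq:Taylor2}). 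Consequently the deterministic drift factor for $Y$ contracts less than that for $\tilde W$, and $Y_n\bigl(1-(b+1)/(n+1)\bigr)\geq \tilde W_n(n/(n+1))^{\beta+1}$ whenever $Y_n\geq \tilde W_n\geq 0$.

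Next I would compare the jump probabilities. Using the upper estimate in \eqref{eq:upperlower} together with $w_n(k)\leq \alpha$, the conditional probability of an upward jump for $\tilde W$ satisfies, for all sufficiently large $n$,
\[
\PP\bigl(\tilde W_{n+1}\uparrow\mid C_n(m)\bigr) \leq 1 - \exp\!\Bigl(-(\alpha/a)\tilde W_n\bigl(1 + O(1/n)\bigr)\Bigr),
\]
since $\sum_{k}w_n(k)/n = W_n/n = (\alpha/a)\tilde W_n$ and $\sum_{k}w_n(k)^2/n^2 \leq (\alpha/n)(W_n/n)$. Because $\alpha/a<1$ strictly, the prefactor $(\alpha/a)\bigl(1+O(1/n)\bigr)$ is bounded by $1$ once $n$ is past a threshold depending on $a,\alpha$, and then the induction hypothesis $Y_n\geq \tilde W_n$ yields $Y_n\geq (\alpha/a)\tilde W_n\bigl(1+O(1/n)\bigr)$. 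This gives $\PP(\tilde W_{n+1}\uparrow)\leq 1-e^{-Y_n}=\PP(Y_{n+1}\uparrow\mid Y_n)$. Realising both jumps through a common $\operatorname{Uniform}[0,1]$ variable $U_{n+1}$, independent of the past and across $n$, therefore nests the two events: $\{\tilde W_{n+1}\uparrow\}\subseteq\{Y_{n+1}\uparrow\}$.

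The two observations combine by induction. I would choose $n_1$ large enough that both the drift inequality $1-(b+1)/(n+1)\geq (n/(n+1))^{\beta+1}$ and the jump-probability bound $(\alpha/a)(1+O(1/n))\leq 1$ hold for every $n\geq n_1$; by hypothesis $Y_{n_1}=w=\tilde W_{n_1}$. A case analysis then yields $Y_{n+1}\geq \tilde W_{n+1}$: if $\tilde W$ jumps then so does $Y$ and the identical increments $a/(n+1)$ preserve the ordering thanks to the drift comparison; if $\tilde W$ does not jump then $Y$ may still jump (a nonnegative extra boost) and the drift comparison again suffices. The main obstacle is quantitative: extracting a single deterministic $n_1$ for which the strict slacks $\beta-b>0$ and $a-\alpha>0$ dominate the $O(1/n)$ corrections coming simultaneously from \eqref{eq:Taylor2} and from the second-order exponential bound in \eqref{eq:upperlower}, \emph{uniformly} in the realisation of $W_{n_1}$ and in $m$. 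This is an elementary estimate, but it is the one place where the strict parameter separation $a>\alpha$, $b<\beta$ is genuinely used.
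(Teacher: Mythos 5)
Your proof is correct and takes essentially the same route as the paper's: induction on $n$, with the one-step update split into a deterministic drift comparison exploiting $b<\beta$ via \eqref{eq:Taylor2} and a jump-probability comparison exploiting $a>\alpha$ via the upper bound in \eqref{eq:upperlower}, followed by a monotone coupling of the jump indicators so that $\{W_{n+1}\,\uparrow\}\subset\{Y_{n+1}\,\uparrow\}$. The only cosmetic differences are that you name the rescaled process $\tilde W_n=\tfrac{a}{\alpha n}W_n$ and spell out the common-uniform coupling construction, both of which the paper performs directly and leaves implicit; your closing concern about uniformity of the threshold $n_1$ is actually harmless since every $O(1/n)$ remainder involved is bounded deterministically by quantities depending only on $\alpha,\beta,a,b$.
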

\begin{proof}
	We argue by induction. The claim holds by construction at any given initial time $n_1\geq m.$ Now assume that the coupling has been established for times $n_1,\dots,n$ then
	\begin{align*}
		Y_{n+1}=\left(1-\frac{b+1}{n+1}\right)Y_n +\frac{a}{n+1}\mathsf{1}\{Y_{n+1} \; \uparrow \} \geq \left(1-\frac{b+1}{n+1}\right)\frac{a}{\alpha}\frac{W_n}{n}+ \frac{a}{\alpha}\frac{\alpha}{n+1}\mathsf{1}\{Y_{n+1} \; \uparrow \},
	\end{align*}
	where we have used the definition of $Y_n$ and the induction hypothesis. Now note that
	$$ \PP(Y_{n+1} \; \uparrow | W_n, Y_n)=1-\textup{e}^{-Y_n}\geq 1-\textup{e}^{-\frac a{\alpha n} W_n}\geq \PP(W_{n+1} \; \uparrow | W_n, Y_n),$$
	by the induction hypothesis and the upper bound in \eqref{eq:upperlower} if $n\geq n_1$ is sufficiently large. We conclude that $W_{n+1}$ and $Y_{n+1}$ can be coupled such that $\{W_{n+1}\;\uparrow\}\subset \{Y_{n+1}\;\uparrow\}$ and it follows that under this coupling
	\begin{align*}
		Y_{n+1} & \geq \frac{a}{\alpha}\left[\left(1-\frac{b+1}{n+1}\right)\frac{W_n}{n}+ \frac{\alpha}{n+1}\mathsf{1}\{Y_{n+1} \; \uparrow \}\right]\\ &
		\geq  \frac{a}{\alpha}\left[\left(\frac{n}{n+1}\right)^{\beta+1}\frac{W_n}{n}+ \frac{\alpha}{n+1}\mathsf{1}\{W_{n+1} \; \uparrow \}\right]\\
		& = \frac{a}{\alpha} \frac{W_{n+1}}{n+1}
	\end{align*}
	where we have used \eqref{eq:Taylor2} and once more that $n_1$ can be taken large.
\end{proof}
Lemmas \ref{lem:dominationlowerbound} and \ref{lem:dominationupperbound} indicate that the dynamics of $(C_n(m))_{n\geq m}$ can be analysed in the terms of the simple processes $(Y_n(m))_{n\geq m}$ defined in Lemma~ \ref{lem:dominationlowerbound}. To this end fix $a,b>0$ and $Y_m=y$, and define for $n\geq m$
\[
M_n = Y_n-Y_m +  \sum_{j=m}^{n-1}\Bigg\{\frac{b+1}{j+1}Y_{j} -  \frac{a}{j+1}\left(1-\textup{e}^{-Y_{j}}\right)\Bigg\},
\]
and
\begin{align*}
	Z_n = \frac{Y_n}{Y_m}& \exp\Bigg\{\sum_{j=m}^{n-1}\left[\frac{b+1}{j+1} -  \frac{a}{j+1} \frac{1-\textup{e}^{-Y_{j}}}{Y_{j}}\right] \Bigg\}.
\end{align*}
It follows from a straightforward calculation or, alternatively, by invoking \cite[Lemma 2.1,Lemma 2.2]{BrighLucza2012} that $(M_n)_{n\geq m}$ is a martingale and that $(Z_n)_{n\geq m}$ is a super-martingale, respectively, adapted to the filtration $(\mathcal{F}_n)_{n\geq m}$ defined by $\mathcal{F}_n=\sigma(Y_j, m \leq j\leq n)$.
\begin{lemma}\label{lem:subcritical}
	If $0<a<b+1$ then $\lim_{n\to\infty}Y_n=0$ almost surely.
\end{lemma}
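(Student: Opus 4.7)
The plan is to exploit the non-negative super-martingale $(Z_n)_{n\geq m}$ already introduced in the excerpt. Since $Y_n\geq 0$, the process $Z_n$ is non-negative, and being a super-martingale it converges almost surely to a finite random limit $Z_\infty$ by Doob's convergence theorem. The strategy is then to bound $Y_n$ from above by $Z_n$ divided by a deterministic divergent factor.

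The key elementary inequality is $(1-\textup{e}^{-y})/y\leq 1$ for all $y>0$, which bounds each summand appearing in the exponential defining $Z_n$ from below by $(b+1-a)/(j+1)$. Summing from $j=m$ to $j=n-1$ therefore yields a lower bound of order $(b+1-a)\log n$, so the exponential factor grows at least like $n^{b+1-a}$ up to a constant depending only on $m$. Rearranging the identity $Z_n=(Y_n/Y_m)\exp(\cdots)$ then gives a bound of the form $Y_n\leq c(m)\,Z_n\, n^{-(b+1-a)}$. Since $b+1-a>0$ by hypothesis and $Z_n$ is almost surely bounded in the limit, this forces $Y_n\to 0$ almost surely.

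The only technical obstacle I anticipate is the degenerate situation in which $Y_n$ vanishes, because the definition of $Z_n$ then breaks down. This is handled separately: once $Y_n=0$, the up-jump probability $1-\textup{e}^{-Y_n}$ vanishes and the multiplicative factor preserves the zero value, so the conclusion is immediate. Conversely, for $n$ sufficiently large that $(b+1)/(n+1)<1$, a strictly positive $Y_n$ cannot drop to zero in a single step, so provided $Y_m>0$ the process stays strictly positive on some tail $n\geq n_1$ and the super-martingale argument may be applied from $n_1$ onward. Beyond this bookkeeping, no further estimate is required; all the probabilistic content is absorbed into the convergence of $Z_n$, and the dissipation is entirely driven by the deterministic comparison of the drift coefficient $b+1$ with the jump rate $a$.
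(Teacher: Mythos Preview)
Your proposal is correct and follows essentially the same route as the paper's own proof: both invoke the almost sure convergence of the non-negative super-martingale $(Z_n)$, use the elementary bound $(1-\textup{e}^{-y})/y\leq 1$ to show the exponential factor in $Z_n$ grows at least like $n^{b+1-a}$, and conclude from $Z_n=(Y_n/Y_m)\exp(\cdots)$ that $Y_n\to 0$. Your added bookkeeping about the degenerate case $Y_n=0$ and about eventual positivity is a nice touch that the paper leaves implicit.
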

\begin{proof}
	Since $(Z_n)_{n\geq m}$ is a non-negative super-martingale, $(Z_n)_{n\geq m}$ converges almost surely to some random variable $Z_\infty$ satisfying $\EE Z_\infty \leq \EE Z_m = 1.$ 
	Since $a<b+1$, it thus follows from
	\begin{align*}
		& \exp\Bigg\{\sum_{j=m}^{n-1}\left[\frac{b+1}{j+1} -  \frac{\alpha}{j+1} \frac{1-\textup{e}^{-Y_{j}}}{Y_{j}}\right] \Bigg\}\\ & \geq \exp\Bigg\{\sum_{j=m}^{n-1}\left[\frac{b+1}{j+1} -  \frac{a}{j+1}\right] \Bigg\} = n^{1+b - a  + o(1)},
	\end{align*}
	that $Z_\infty$ can only be finite, if $(Y_n)_{n\geq m}$ converges to $0$. Hence, $\lim_{n\to\infty }Y_n=0$ almost surely.
	
\end{proof}
Let us now focus on the case $a>b+1$. Performing a Taylor expansion, we see that
\begin{equation}\label{eq:TaylorY}
	a (1-\textup{e}^{-y})-(1+b)y = \left(a \textup{e}^{-y_\ast}-(1+ b)\right)(y-y_\ast)-\frac{a}{2}\xi(y)\textup{e}^{-\xi(y)}(y-y_\ast)^2,
\end{equation}
where $\xi(y)$ is some value between $y_\ast$ and $y$ and $y_\ast=y_\ast(a,b)$ is the unique positive solution to
\begin{equation}\label{eq:fixointab}
	1-\textup{e}^{-y}= \frac{1+\beta}{\alpha}y.
\end{equation}
Using the Martingale $(M_n)_{n\geq m}$, we can represent $Y_n$ as
\begin{equation}\label{eq:Ypose}
	\begin{aligned}
		Y_m & = y>0\\
		\Delta Y_n & = \frac{a}{n+1}\left(1-\textup{e}^{-Y_n}\right)- \frac{b+1}{n+1} Y_n + \Delta M_n, \quad n\geq m.
	\end{aligned}
\end{equation}
In view of \eqref{eq:TaylorY} and \eqref{eq:fixointab}, the recursion \eqref{eq:Ypose} is a stochastic approximation of the stable point $y_\ast.$ We recall the following well-known result about stochastic approximations:
\begin{lemma}[{Special case of \cite[Lemma 2.6]{Peman2007}}]\label{lem:stochasticapp}
	Suppose that $(X_n)_{n\in\mathbb{N}}$ is a stochastic process adapted to a filtration $(\mathcal{G}_n)_{n\in\mathbb{N}}$ satisfying the recurrence relation
	\[
	\Delta X_n=c_n\left(f(X_{n})+\xi_{n+1}\right),
	\]
	where $(c_n)_{n\in\mathbb{N}}$ is a deterministic sequence, $(\xi_n)_{n\in\mathbb{N}}$ is a $(\mathcal{G}_n)_{n\in\mathbb{N}}$-adapted process and $f$ is a bounded function satisfying
	\begin{itemize}
		\item $\sum_{n\in\mathbb{N}}c_n=\infty,\; \sum_{n\in\mathbb{N}}c_n^2<\infty$,
		\item $\EE[\xi_{n+1}|\mathcal{G}_n]=0, \; \EE[\xi_{n+1}^2|\mathcal{G}_n]\leq B<\infty$ for all $n\in\mathbb{N}$,
		\item either $f((l_0,r_0))\subset (-\infty,-\delta)$ or $f((l_0,r_0))\subset (\delta,\infty)$ for some $\delta>0$ and some open interval $(l_0,r_0)$.
	\end{itemize}
	Then we have for any interval $[l,r]\subset(l_0,r_0)$ that
	\[
	\PP(X_n \text{ visits }[l,r] \text{ only finitely many times})=1.
	\]
\end{lemma}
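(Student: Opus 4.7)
The plan is to combine martingale convergence for the noise with a drift-based excursion argument that forces $X$ out of the corridor $(l_0,r_0)$ through the favorable boundary each time it visits $[l,r]$. Without loss of generality I assume $f((l_0,r_0))\subset(\delta,\infty)$; the opposite case is symmetric upon replacing $(X,f)$ with $(-X,-f)$.

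The first step is to control the noise. The partial sums $M_n:=\sum_{k=1}^{n-1}c_k\xi_{k+1}$ form a $(\mathcal{G}_n)$-martingale with $\EE[M_n^2]\le B\sum_k c_k^2<\infty$, so by Doob's $L^2$-convergence theorem $M_n\to M_\infty$ almost surely. Combining this with the Chebyshev--Borel--Cantelli bound $\sum_n\PP(c_n|\xi_{n+1}|>\eta)\le \eta^{-2}B\sum_n c_n^2<\infty$, I obtain that $c_n|\xi_{n+1}|\to 0$ almost surely. Hence, for any $\varepsilon>0$ there exists an almost surely finite random time $N$ such that $|M_n-M_m|<\varepsilon$ for all $n\ge m\ge N$ and such that $c_k(\|f\|_\infty+|\xi_{k+1}|)<\varepsilon$ for all $k\ge N$; in particular, single-step increments of $X$ are smaller than $\varepsilon$ past time $N$.

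Fix $\varepsilon$ with $4\varepsilon<\min(l-l_0,\,r_0-r)$, so in particular $\varepsilon<r_0-l_0$, and for $k\ge N$ the process $X$ cannot cross the entire corridor $(l_0,r_0)$ in a single step. I then argue by contradiction: suppose on a set of positive probability $X_n\in[l,r]$ for infinitely many $n$, and pick a visit time $\tau\ge N$. For $n>\tau$, as long as $X_\tau,\dots,X_{n-1}\in(l_0,r_0)$, the telescoping identity
\[
X_n-X_\tau=\sum_{k=\tau}^{n-1}c_k f(X_k)+(M_n-M_\tau)
\]
together with $f(X_k)>\delta$ gives $X_n\ge l-\varepsilon+\delta\sum_{k=\tau}^{n-1}c_k$. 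Since $\sum_k c_k=\infty$, the trajectory must exit $(l_0,r_0)$ at some finite time $\sigma>\tau$; the same lower bound $X_\sigma>l_0$ forces the exit through the upper boundary, so $X_\sigma\ge r_0$.

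The main obstacle is ruling out subsequent visits to $[l,r]$. The key observation is a no-corridor-jumping property: since single-step jumps are smaller than $r_0-l_0$ past time $N$, whenever $X_{k-1},X_k\notin(l_0,r_0)$ both values must lie on the same side of the corridor. Consequently, after $\sigma$ the process can re-enter $(l_0,r_0)$ only from above, through some step $\rho\mapsto\rho+1$ with $X_\rho\ge r_0$ and hence $X_{\rho+1}\in[r_0-\varepsilon,r_0)$. Running the telescoping estimate again from $\rho+1$ while $X$ remains in $(l_0,r_0)$ yields $X_k\ge r_0-2\varepsilon>r$ throughout the excursion, so the excursion never touches $[l,r]$ and must exit once more through the upper boundary. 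Iterating, $X_n\notin[l,r]$ for all $n>\sigma$, contradicting the assumption. The delicate step is the no-corridor-jumping property, which is precisely where the summability $\sum c_n^2<\infty$ is indispensable; the remaining bookkeeping follows cleanly from the drift estimate.
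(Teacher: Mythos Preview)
The paper does not give its own proof of this lemma; it is quoted as a special case of \cite[Lemma~2.6]{Peman2007} and used as a black box in the proof of Theorem~\ref{thm:giant}. So there is no in-paper argument to compare against.

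Your argument is correct and is essentially the standard stochastic-approximation proof one finds in Pemantle's survey: the $L^2$-bounded martingale $M_n=\sum_{k<n}c_k\xi_{k+1}$ converges almost surely by $\sum c_n^2<\infty$, hence its tails are eventually $\varepsilon$-small; together with $c_n\to 0$ and boundedness of $f$ this gives the small-step control. The drift/telescoping estimate then pushes $X$ out of $(l_0,r_0)$ through the favorable boundary, and the small-step property prevents the process from tunneling through the corridor, so any re-entry is from above and the same drift estimate keeps the excursion above $r$. One small comment: your final sentence slightly oversells the role of $\sum c_n^2<\infty$ for the no-jumping step; that step really uses $c_n\to 0$ (implied by the square-summability) together with $c_n|\xi_{n+1}|\to 0$, whereas the square-summability is also, and more crucially, what makes the martingale tail $|M_n-M_m|<\varepsilon$ work for the telescoping bound. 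None of this affects the validity of the argument.
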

We have now all the ingredients to complete the proof of Theorem~\ref{thm:giant}.
\begin{proof}[Proof of {Theorem \ref{thm:giant}}]
	Let $\Gamma_n=|C_n(m)|$, $n\geq m$ and $W_n=w_n(C_n(m))$. We estimate
	\[
	\frac{\alpha}{n^{\beta}}\sum_{k=m}^{m+\Gamma_n-1} k^\beta \leq W_n\leq \frac{\alpha}{n^{\beta}}\sum_{k=n-\Gamma_n}^{n} k^\beta, \quad n\geq m.
	\]
	Approximating the sums by integrals yields
	\[
	\frac{\alpha}{\beta+1}n\left(\frac{\Gamma_n}{n} \right)^{\beta+1} + O(n^{-\beta}) \leq W_n\leq \frac{\alpha}{\beta+1} n\left( 1-\left(1-\frac{\Gamma_n}{n} \right)^{\beta+1}\right)+ O(n^{-\beta}) , \quad n\geq m,
	\]
	from which we conclude that $\lim_{n\to\infty}\frac{\Gamma_n}{n}=Z$ exists if and only if $\lim_{n\to\infty}\frac{W_n}{n}=w_\infty$ exists.\\
	\noindent\textit{The subcritical case }$\alpha<\beta+1:$ By Lemma~\ref{lem:dominationupperbound}, $W_n/n$ is eventually dominated by $Y_n(n_1, \alpha',\beta', y)$ for some $y$, if $\alpha<\alpha'<\beta'+1<\beta+1$. Since $\lim_{n\to\infty} Y_n=0$ almost surely by Lemma~\ref{lem:subcritical}, we conclude that
	$$
	\PP(Z=w_\infty=0)=1.
	$$
	\noindent\textit{The supercritical case }$\alpha>\beta+1:$ We first employ Lemma~\ref{lem:dominationlowerbound} to dominate $w_\infty$ from below by the random limit
	\[
	Y_{\infty}=Y_\infty(\alpha,\beta)=\lim_{n\to\infty}Y_n(m; \alpha,\beta, W_m/m),
	\]
	which we show to exist now. Rewriting \eqref{eq:Ypose}, we obtain
	\[
	\Delta Y_n =\frac{1}{n+1}\left(\alpha (1-\textup{e}^{-Y_n})-(\beta+1)Y_n + (n+1)\Delta M_n \right),\quad n\geq m.
	\]
	We wish to apply Lemma~\ref{lem:stochasticapp} with $c_n=(n+1)^{-1}, \xi_{n+1}=(n+1)\Delta M_n$ and $$f(x)= \mathsf{1}_{[0,K]}(x)\left(\alpha (1-\textup{e}^{-x})-(\beta+1)x\right), \quad x\in\mathbb{R}.$$
	Let us first check that the condition on $\xi_n$ is satisfied. Clearly, $\EE [\xi_{n+1}| Y_n]=0$, since $\Delta M_n$ is a martingale difference. To see that $\EE [\xi_{n+1}^2| Y_n]$ is bounded, we simply note that
	$$
	|\Delta M_n| \leq |\Delta Y_n|+ (1+Y_n)\frac{\alpha+\beta+1}{n+1}\leq \frac{C}{n+1}
	$$
	for some constant $C$ by definition of $Y_n$ and the fact that $Y_n$ is bounded, hence $\xi_n$ is bounded. Let us now analyse the recursion function $f$. Note that the truncation of $f$ is necessary du to the assumption of boundedness in Lemma~\ref{lem:stochasticapp}, but this is no problem, since clearly $Y_n>0$ for all $n$ and $Y_n\leq W_n/n$ due to the coupling and $W_n/n$ is easily seen to be bounded by some deterministic value $K>\frac{\alpha}{\beta+1}$ (mind that $W_n/n\leq \frac{\alpha}{\beta+1}+o(1)$.) Note that $f$ is continuous on $[0,K]$ with $f(x)=0$ for $x\in[0,K]$ if and only if $x\in\{0,y_\ast\}$, where $y_{\ast}=y_{\ast}(\alpha,\beta)$ is given in \eqref{eq:fixointab}. Note further that $f(x)>0$ if $x\in(0,y_\ast)$ and $f(x)<0$ if $x\in (y_\ast,K)$. Continuity of $f$ now implies that for any sufficiently small $\varepsilon_0>0$ there exits a $\delta_0>0$ such that
	\[
	f(x)>\delta_0 \text{ for all }x\in (\varepsilon_0, y_{\ast}-\varepsilon) \text{ and } f(x)<-\delta_0 \text{ for all }x\in \left(y_{\ast}+\varepsilon_0, \frac{\alpha}{\beta+1}+\varepsilon_0\right),
	\]
	where we note that $y_{\ast}<\frac{\alpha}{\beta+1}$ for any choice of $\alpha>\beta+1$. By Lemma~\ref{lem:stochasticapp}, we deduce that any closed bounded interval not including either $0$ or $y_\ast$ is visited only finitely often and hence $(Y_n)_{n\geq m}$ converges almost surely to a random variable $Y_\infty$ with distribution
	$$
	(1-p)\delta_0 + p \delta_{y_\ast}, \quad \text{for some } p\in[0,1].
	$$
	We say $(Y_n)_{n\geq m}$ \emph{survives} if $Y_\infty=y_\ast$ and that it \emph{dies out} if $Y_\infty=0$. We are going to show now that $p>0$, i.e.\ $(Y_n)_{n\geq m}$ survives with positive probability. Observe that we have
	\[
	\Delta \EE [Y_n|\mathcal{F}_m] = \frac{\alpha}{n+1}\EE [1-\textup{e}^{-Y_n}|\mathcal{F}_m] - \frac{\beta+1}{n+1}\EE [Y_n|\mathcal{F}_m] , \quad n\geq k.
	\]
	Note that if $(Y_n)_{n\geq m} \text{ dies out}$ a.s., then we have that $\lim_{k\to\infty}\sup_{n\geq k}\EE [Y_n|\mathcal{F}_k]=0$ almost surely. Since $\EE [1-\textup{e}^{-Y_n}|\mathcal{F}_k]=\EE [Y_n|\mathcal{F}_k] + O(\EE [Y_n|\mathcal{F}_k]^2)$ as $\EE [Y_n|\mathcal{F}_k]$ vanishes, it follows that for every $\varepsilon\in (0,\alpha-\beta-1)$ there exists some $K_0$ such that
	\[
	\Delta \EE [Y_n|\mathcal{F}_k] \geq \frac{\alpha-\beta-1-\varepsilon}{n+1}\EE [Y_n|\mathcal{F}_k]   , \quad n\geq k\geq K_0.
	\]
	Denoting $\zeta=\alpha-\beta-1-\varepsilon>0$, and setting $Y_{k}=:a(k)>0$, it follows that $(\EE [Y_n|\mathcal{F}_k])_{n\geq k}$ dominates the unique solution of the difference equation
	\[
	a_{k}=a(k), \quad \Delta a_n=\frac{\zeta}{n+1}a_n,\quad n\geq k,
	\]
	which satisfies $\lim_{n\to\infty}a_n=\infty$ for any initial condition $a(k)>0$. This is a contradiction to $\lim_{k\to\infty}\sup_{n\geq k}\EE [Y_n|\mathcal{F}_k]=0$ and we conclude that $$
	p=\PP((Y_n)_{n\geq m} \text{ survives})>0.
	$$
	Combining Lemmas~\ref{lem:dominationlowerbound} and~\ref{lem:dominationupperbound}, we obtain that
	$$
	y_{\ast}(\alpha,\beta)\leq w_\infty \leq y_\ast(\alpha+\varepsilon,\beta-\varepsilon)
	$$
	for all sufficiently small $\varepsilon$ and since the root $y_\ast(a,b)$ of \eqref{eq:fixointab} depends continuously on $a$ and $b$, we conclude that $w_\infty=\lim_{n\to\infty}\frac{W_n}{n}=y_\ast(\alpha,\beta)>0$ almost surely on the event of survival. It follows that, almost surely on survival, $\lim_{n\to\infty}\frac{\Gamma_n}{n}=:\gamma>0$. Let $q_n$ denote the probability that a uniformly chosen vertex in $G_n$ is contained in $C_n(m)$, then $\lim_{n\to\infty}q_n=\gamma\PP(\lim_{n\to\infty}\Gamma_n/n>0)$ and, for any $m\geq n$,
	\[
	q_n = \frac{1}{n}+\frac{1}{n}\sum_{k=m}^{n-1} \PP(k+1\in C_n(m))=\frac{1}{n}+ \frac{1}{n}\EE\left(\sum_{k=m}^{n-1} \EE [\mathsf{1}\{k+1\in C_{k+1}(m)\}|G_{k}]\right).
	\]
	Note that $\EE [\mathsf{1}\{k+1\in C_{k+1}(m)\}|G_{k}]=1-\textup{e}^{-W_k/k}$ and that for any $\varepsilon\in(0,1)$
	\[
	\sum_{k=\lceil\varepsilon n\rceil-1}^{n-1}\EE(1-\textup{e}^{-W_k/k})\leq \sum_{k=m}^{n-1}\EE(1-\textup{e}^{-W_k/k})\leq \sum_{k=\lceil\varepsilon n\rceil-1}^{n-1}\EE(1-\textup{e}^{-W_k/k}) +\varepsilon n +1,
	\]
	if $n$ is sufficiently large. We infer that $\gamma= \frac{1+\beta}{\alpha} y_\ast(\alpha,\beta)$ and this concludes the proof of the critical case by establishing a one-to-one relation between $Y_\infty$ and the random variable $Z=Z(m)$ as defined in the theorem. \\
	\noindent\textit{The critical case }$\alpha=\beta+1:$ From the supercritical case we infer that almost surely $\gamma=0$ in this case, since $\gamma$ is a monotone function of $\alpha$ and $\lim_{\alpha\downarrow \beta+1} y_\ast(\alpha,\beta)=0$.
\end{proof}

\begin{remark} \begin{enumerate}[(a)]
		\item The original version of Lemma \ref{lem:stochasticapp} in \cite{Peman2007} also allows to include an error term $R_n$ in the recursion as long as $\sum_n c_n R_n<\infty$. In principle one can apply this version of the stochastic approximation result directly to $W_n/n$. We have chosen to include the intermediate step via $Y_n$ because it breaks up the error estimates into several parts and makes the argument easier to follow, since $(Y_n)_{n\geq m}$ has a rather simple form.
		\item Note that the subcritical result also follows from the supercritical/critical case by monotonicity, but we gave a stand-alone argument since the supercritical case is more involved than the direct proof. Unfortunately, this simple argument does not carry over to the critical case.
	\end{enumerate}
\end{remark}

\bibliography{tangle}{}
\bibliographystyle{alpha} 

\medskip
\medskip

\end{document}